\providecommand{\U}[1]{\protect\rule{.1in}{.1in}}
\font\teneusb=eusb10 \font\seveneusb=eusb7 \font\fiveeusb=eusb5
\font\tenbifull=cmmib10
\font\tenbimed=cmmib7
\font\tenbismall=cmmib5
\mathchardef\bbGamma="7000 \mathchardef\bbDelta="7001
\mathchardef\bbPhi="7002 \mathchardef\bbAlpha="7003
\mathchardef\bbXi="7004 \mathchardef\bbPi="7005
\mathchardef\bbSigma="7006 \mathchardef\bbUpsilon="7007
\mathchardef\bbTheta="7008 \mathchardef\bbPsi="7009
\mathchardef\bbOmega="700A \mathchardef\bbalpha="710B
\mathchardef\bbbeta="710C \mathchardef\bbgamma="710D
\mathchardef\bbdelta="710E \mathchardef\bbepsilon="710F
\mathchardef\bbzeta="7110 \mathchardef\bbeta="7111
\mathchardef\bbtheta="7112 \mathchardef\bbiota="7113
\mathchardef\bbkappa="7114 \mathchardef\bblambda="7115
\mathchardef\bbmu="7116 \mathchardef\bbnu="7117
\mathchardef\bbxi="7118 \mathchardef\bbpi="7119
\mathchardef\bbrho="711A \mathchardef\bbsigma="711B
\mathchardef\bbtau="711C \mathchardef\bbupsilon="711D
\mathchardef\bbphi="711E \mathchardef\bbchi="711F
\mathchardef\bbpsi="7120 \mathchardef\bbomega="7121
\mathchardef\bbvarepsilon="7122 \mathchardef\bbvartheta="7123
\mathchardef\bbvarpi="7124 \mathchardef\bbvarrho="7125
\mathchardef\bbvarsigma="7126 \mathchardef\bbvarphi="7127
\newcommand{\N}{{\rm I}\kern-0.18em{\rm N}}
\newcommand{\h}{{\rm I}\kern-0.18em{\rm H}}
\newcommand{\K}{{\rm I}\kern-0.18em{\rm K}}
\newcommand{\Z}{{\rm Z}\kern-0.34em{\rm Z}}
\newcommand{\1}{{\rm 1}\kern-0.22em{\rm I}}
\newtheorem{thm}{Theorem}[section]
\newtheorem{ex}{Example}[section]
\newtheorem{cor}{Corollary}[section]
\newtheorem{lem}{Lemma}[section]
\newtheorem{rem}{Remark}[section]
\numberwithin{equation}{section}
\newcounter{eqroman}
\begin{document}

\title{An Abelian theorem with application to the conditional Gibbs principle}
\author{Zhansheng Cao}

\affil{LSTA, Université Paris 6, France}

\maketitle

\begin{abstract}
Let $X_1,...,X_n$ be $n$ independent unbounded real random variables which have common , roughly speaking, light-tailed type distribution. Denote by $S_1^n$ their sum and by $\pi^{a_n}$ the tilted density of $X_1$, where $a_n \rightarrow\infty$ as $n\rightarrow \infty$. An Abelian type theorem is given, which is used to approximate the first three centered moments of the distribution $\pi^{a_n}$. Further, we provide the Edgeworth expansion of $n$-convolution of the normalized tilted density under the setting of a triangular array of row-wise independent summands, which is then applied to obtain one local limit theorem conditioned on extreme deviation event $(S_1^n/n=a_n)$ with $a_n\rightarrow \infty$.

\begin{flushleft}
\textbf{Key words.} Abelian theorem, Edgeworth expansion, extreme deviation, Gibbs principle
\end{flushleft}

\end{abstract}

\section{\bigskip Introduction} \label{secintroduction}

It will be assumed that $P_{X}$, which is the distribution of $X_{1}$, has a
density $p$ with respect to the Lebesgue measure on $\mathbb{R}$. The fact
that $X_{1}$ has a light tail is captured in the hypothesis that $X_{1}$ has a
moment generating function
\[
\Phi(t):=E\exp tX_{1}%
\]
which is finite in a non void neighborhood $\mathcal{N}$ of $0.$ This fact is
usually refered to as a Cramer type condition.

Defined on $\mathcal{N}$ are the following functions. The functions
\[
t\rightarrow m(t):=\frac{d}{dt}\log\Phi(t)
\]%
\[
t\rightarrow s^{2}(t):=\frac{d}{dt}m(t)
\]

\[
t\rightarrow\mu_{j}(t):=\frac{d}{dt}s^{2}(t)\text{ \ , \ }j=3,4
\]
are the expectation and the three first centered moments of the r.v.
$\mathcal{X}_{t}$ with density
\[
\pi_{t}(x):=\frac{\exp tx}{\Phi(t)}p(x)
\]
which is defined on $\mathbb{R}$ and which is the tilted density with
parameter $t.$ When $\Phi$ is steep, meaning that
\[
\lim_{t\rightarrow t^{+}}m(t)=\infty
\]
where $t^{+}:=ess\sup\mathcal{N}$ then $m$ parametrizes the convex hull of the
support of $P_{X}.$ We refer to Barndorff-Nielsen \cite{barndorff} for those properties.
As a consequence of this fact, for all $a$ in the support of $P_{X}$, it will
be convenient to define
\[
\pi^{a}=\pi_{t}%
\]
where $a$ is the unique solution of the equation $m(t)=a.$

Let $X_{1}^{n}:=\left(  X_{1},...,X_{n}\right)  $ denote $n$ independent
unbounded real valued random variables and $S_{1}^{n}:=X_{1}+...+X_{n}$ denote
their sum.

 A first contribution of this paper is the approximation of the first three moments of the tilted density, from which we obtain the Edgeworth expansion of $n$-convolution of the normalized tilted density. It is worthwhile to note that this expansion is under the setting of a triangular array of row-wise independent summands.

We now come to some remark on the Gibbs conditional principle in the standard
above setting. A phrasing of this principle is: As $n$ tends to infinity the conditional distribution of $X_{1}$ given
$\left(  S_{1}^{n}/n=a\right)  $ is $\Pi^{a},$ the distribution with density
$\pi^{a}$ (See \cite{DIFR1988} and \cite{DEZE1996}).

Another contribution is that we obtain the local limit distribution of $(X_1,...,X_k)$ for fixed integer $k$ conditioned on extreme deviations (ED) pertaining
to $S_{1}^{n}.$ By extreme deviation we mean that $S_1^n/n$ is supposed to take values which are
going to infinity as $n$ increases (See also Borovkov and Mogul$'$ski$\breve{\i}$ \cite{BorovMogul1} \cite{BorovMogul2}, where the authors call this case \lq\lq superlarge deviation"). It will be showed that for fixed $k$, any set of
r.v.'s $X_{i_1},...,X_{i_k}$ are asymptotically independent given $S_1^n/n=a_n$ with $1\le i_1 \le i_k \le n$, which extends the events by Dembo and Zeitouni \cite{DEZE1996} to the extreme deviation case; see also Csisz\'{a}r \cite{CS1984} for a similar result.

The paper is organized as follows. Notation and hypotheses are stated in
Section \ref{notation}, along with some necessary facts from asymptotic analysis in the
context of light tailed densities. In Section \ref{secAppM}, the approximations of the expectation and the two first centered moments of the tilted density are given. Section \ref{secEdg} states the Edgeworth expansion under extreme normalizing factors. Section \ref{secGibbs} provides the Gibbs' conditional limit theorem under extreme events.

The main tools to be used come from asymptotic analysis and local limit
theorems, developed from \cite{Feller71} and \cite{Bingham}; we also have
borrowed a number of arguments from \cite{Nagaev}. A number of technical
lemmas have been postponed to Section \ref{Chp2proof}. \bigskip

\section{Notation and hypotheses}\label{notation}

In this paper, we consider the uniformly bounded density function $p(x)$
\begin{equation}
p(x)=c\exp\Big(-\big(g(x)-q(x)\big)\Big)\qquad x\in\mathbb{R}_{+}%
,\label{densityFunction}%
\end{equation}
where $c$ is some positive normalized constant. Define $h(x):=g^{\prime}(x) $.
We assume that there exists some positive constant $\vartheta$ ,
for large $x$, it holds
\begin{equation}
\sup_{|v-x|<\vartheta x}|q(v)|\leq\frac{1}{x\sqrt{h(x)}}%
.\label{densityFunction01}%
\end{equation}
The function $g$ is positive and satisfies
\begin{equation}
\frac{g(x)}{x}\longrightarrow\infty,\qquad x\rightarrow\infty
.\label{3section101}%
\end{equation}

Not all positive $g$'s satisfying $(\ref{3section101})$ are adapted to our
purpose. Regular functions $g$ are defined as follows. We define firstly a
subclass $R_{0}$ of the family of \emph{slowly varying} function. A function
$l$ belongs to $R_{0}$ if it can be represented as
\begin{equation}
l(x)=\exp\Big(\int_{1}^{x}\frac{\epsilon(u)}{u}du\Big),\qquad x\geq
1,\label{3section102}%
\end{equation}
where $\epsilon(x)$ is twice differentiable and $\epsilon(x)\rightarrow0$ as
$x\rightarrow\infty$.

We follow the line of Juszczak and Nagaev $\cite{Nagaev}$ to describe the
assumed regularity conditions of $h$.

\textbf{Class ${R_{\beta}}$ :} $h(x)\in{R_{\beta}}$, if, with $\beta>0$ and
$x$ large enough, $h(x)$ can be represented as
\[
h(x)=x^{\beta}l(x),
\]
where $l(x)\in R_{0}$ and in $(\ref{3section102})$ $\epsilon(x)$ satisfies
\begin{equation}
\limsup_{x\rightarrow\infty}x|\epsilon^{\prime}(x)|<\infty,\qquad
\limsup_{x\rightarrow\infty}x^{2}|\epsilon^{^{\prime\prime}}(x)|<\infty
.\label{3section104}%
\end{equation}

\textbf{Class ${R_{\infty}}$ :} Further, $l\in\widetilde{R_{0}}$, if, in
$(\ref{3section102})$, $l(x)\rightarrow\infty$ as $x\rightarrow\infty$ and
\begin{equation}
\lim_{x\rightarrow\infty}\frac{x\epsilon^{\prime}(x)}{\epsilon(x)}%
=0,\qquad\lim_{x\rightarrow\infty}\frac{x^{2}\epsilon^{^{\prime\prime}}%
(x)}{\epsilon(x)}=0,\label{3section103}%
\end{equation}
and for some $\eta\in(0,1/8)$
\begin{equation}
\liminf_{x\rightarrow\infty}x^{\eta}\epsilon(x)>0.\label{3section1030}%
\end{equation}
We say that $h\in{R_{\infty}}$ if $h$ is increasing and strictly monotone and
its inverse function $\psi$ defined through
\begin{equation}
\psi(u):=h^{\leftarrow}(u):=\inf\left\{  x:h(x)\geq u\right\}
\label{inverse de h}%
\end{equation}
belongs to $\widetilde{R_{0}}$.

Denote $\mathfrak{R:}={R_{\beta}}\cup{R_{\infty}}$. In fact, $\mathfrak{R} $
covers one large class of functions, although, ${R_{\beta}}$ and ${R_{\infty}%
}$ are only subsets of \emph{Regularly varying} and \emph{Rapidly varying}
functions, respectively.

\begin{rem}
The role of $(\ref{3section102})$ is to make $h(x)$ smooth enough. Under
$(\ref{3section102})$ the third order derivative of $h(x)$ exists, which is
necessary in order to use a Laplace method for the asymptotic evaluation of
the moment generating function $\Phi(t)$ as $t\rightarrow\infty$, where
\[
\Phi(t)=\int_{0}^{\infty}e^{tx}p(x)dx=c\int_{0}^{\infty}\exp
\Big(K(x,t)+q(x)\Big)dx,\qquad t\in(0,\infty)
\]
in which
\[
K(x,t)=tx-g(x).
\]
If $h\in\mathfrak{R}$, $K(x,t)$ is concave with respect to $x$ and takes its
maximum at $\hat{x}=h^{\leftarrow}(t)$. \ The evaluation of $\Phi(t)$ for
large $t$ follows from an expansion of $K(x,t)$ in a neighborhood of $\hat
{x};$ this is Laplace's method. This expansion yields
\begin{align}\label{abelK(t)}
K(x,t)=K(\hat{x},t)-\frac{1}{2}h^{\prime}(\hat{x})\big(x-\hat{x}%
\big)^{2}-\frac{1}{6}h^{\prime\prime}(\hat{x})\big(x-\hat{x}\big)^{3}%
+\varepsilon(x,t),
\end{align}
where $\varepsilon(x,t)$ is some error term. Conditions $(\ref{3section103})$
$(\ref{3section1030})$ and $(\ref{3section104})$ guarantee that $\varepsilon
(x,t)$ goes to $0$ when $t$ tends to $\infty$ and $x$ belongs to some
neighborhood of $\hat{x}$.
\end{rem}

\begin{ex}
\textbf{Weibull Density. }Let $p$ be a Weibull density with shape parameter
$k>1$ and scale parameter $1$, namely%
\begin{align*}
p(x)  & =kx^{k-1}\exp(-x^{k}),\qquad x\geq0\\
& =k\exp\Big(-\big(x^{k}-(k-1)\log x\big)\Big).
\end{align*}
Take $g(x)=x^{k}-(k-1)\log x$ and $q(x)=0$. Then it holds
\[
h(x)=kx^{k-1}-\frac{k-1}{x}=x^{k-1}\big(k-\frac{k-1}{x^{k}}\big).
\]
Set $l(x)=k-(k-1)/x^{k},x\geq1$, then $(\ref{3section102})$ holds, namely,
\[
l(x)=\exp\Big(\int_{1}^{x}\frac{\epsilon(u)}{u}du\Big),\qquad x\geq1,
\]
with
\[
\epsilon(x)=\frac{k(k-1)}{kx^{k}-(k-1)}.
\]
The function $\epsilon$ is twice differentiable and goes to $0$ as
$x\rightarrow\infty$. Additionally, $\epsilon$ satisfies condition
$(\ref{3section104})$. Hence we have shown that $h\in R_{k-1}$.
\end{ex}

\begin{ex}
\textbf{A rapidly varying density.} Define $p$ through
\[
p(x)=c\exp(-e^{x-1}),\qquad x\geq0.
\]
Then $g(x)=h(x)=e^{x-1}$ and $q(x)=0$ for all non negative $x$. We show that
$h\in R_{\infty}$. It holds $\psi(x)=\log x+1$. Since $h(x)$ is increasing and
monotone, it remains to show that $\psi(x)\in\widetilde{R_{0}}$. When $x\geq
1$, $\psi(x)$ admits the representation of $(\ref{3section102})$ with
$\epsilon(x)=1/(\log x+1)$. Also conditions $(\ref{3section103})$ and
$(\ref{3section1030})$ are satisfied. Thus $h\in R_{\infty}$.
\end{ex}

Throughout the paper we use the following notation. When a r.v. $X$ has
density $p$ we write $p(X=x)$ instead of $p(x).$ This notation is useful when
changing measures. For example $\pi^{a}(X=x)$ is the density at point $x$ for
the variable $X$ generated under $\pi^{a}$, while $p(X=x)$ states for $X$
generated under $p.$ This avoids constant changes of notation.

\section{An Abelian-type theorem}\label{secAppM}
We inherit of the definition of the tilted density $\pi^{a}$ defined in
Section \ref{secintroduction}, and of the corresponding definitions of the functions $m$, $s^{2}$
and $\mu_{3}$. Because of (\ref{densityFunction}) and the various
conditions on $g$ those functions are defined as $t\rightarrow\infty.$ The
following Theorem is basic for the proof of the remaining results.

\begin{thm}
\label{order of s} Let $p(x)$ be defined as in $(\ref{densityFunction})$ and
$h(x)\in\mathfrak{R}$. Denote by
\[
m(t)=\frac{d}{dt}\log\Phi(t),\quad\quad s^{2}(t)=\frac{d}{dt}m(t),\qquad
\mu_{3}(t)=\frac{d^{3}}{dt^{3}}\log\Phi(t),
\]
then with $\psi$ defined as in (\ref{inverse de h}) it holds as $t\rightarrow
\infty$
\[
m(t)\sim\psi(t),\qquad s^{2}(t)\sim\psi^{\prime}(t),\qquad\mu_{3}(t)\sim
\frac{M_{6}-9}{6}\psi^{^{\prime\prime}}(t),
\]
where $M_{6}$ is the sixth order moment of standard normal distribution.
\end{thm}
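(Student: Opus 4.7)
The plan is to interpret the three quantities analytically as moments of the tilted law $\pi_t$ and to evaluate them by a Laplace expansion of the integral representation of $\Phi(t)$. Writing
\[
\Phi(t)=c\int_{0}^{\infty}e^{K(x,t)+q(x)}dx,\qquad K(x,t)=tx-g(x),
\]
and observing that under the hypothesis $h\in\mathfrak{R}$ the concave function $x\mapsto K(x,t)$ attains its unique maximum at $\hat{x}=\psi(t)=h^{\leftarrow}(t)$, I would parametrize the integrand by the rescaled variable $y=\sqrt{h'(\hat{x})}\,(x-\hat{x})$. By the expansion \eqref{abelK(t)} together with $h'(\hat{x})=1/\psi'(t)$ (inverse function theorem) and $\psi''(t)=-h''(\hat{x})/h'(\hat{x})^3$, the density $\pi_t$ reads, in the $y$ variable,
\[
\pi_t(x)\,dx\;\propto\;e^{-y^{2}/2}\exp\!\Bigl(-\alpha(t)\,y^{3}+r(y,t)\Bigr)\,dy,
\qquad \alpha(t):=\frac{h''(\hat{x})}{6\,h'(\hat{x})^{3/2}},
\]
where $r(y,t)$ collects the higher-order contributions of $K$ plus $q(x)$. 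The first block of the proof would be precisely to show that $\alpha(t)\to 0$, that $r(y,t)$ is negligible for $|y|$ not too large, and that the contribution of $|x-\hat{x}|>\vartheta\hat{x}$ (outside the Laplace window) is exponentially small; this is the standard tail-control step of Laplace's method adapted to the regularity class $\mathfrak{R}$.

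Once the localisation is in place, I would extract the three asymptotic equivalents by computing Gaussian moments with the cubic correction. Expanding $\exp(-\alpha y^{3})=1-\alpha y^{3}+\tfrac{1}{2}\alpha^{2}y^{6}+\cdots$ and using $\E[Y^{2}]=1$, $\E[Y^{4}]=3$, $\E[Y^{6}]=M_{6}=15$ for $Y\sim\mathcal{N}(0,1)$, one obtains, in the $y$-variable,
\[
\E_{\pi_t}[X-\hat{x}]=\frac{-3\alpha}{\sqrt{h'(\hat{x})}}+o(\cdot),
\qquad
\mathrm{Var}_{\pi_t}(X)=\frac{1}{h'(\hat{x})}\bigl(1+o(1)\bigr).
\]
The second relation gives $s^{2}(t)\sim\psi'(t)$ directly. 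Since by hypothesis $\psi(t)\to\infty$ and the bias $\E[X-\hat{x}]$ is of smaller order than $\hat{x}$, the first relation gives $m(t)\sim\psi(t)$.

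For $\mu_{3}(t)$ one must be slightly careful: it is the third centred moment about $m(t)$, not about $\hat{x}$. Setting $c:=m(t)-\hat{x}$ and using $\mu_{3}(t)=\E[(X-\hat{x})^{3}]-3c\,s^{2}(t)-c^{3}$, the preceding expansion yields
\[
\E[(X-\hat{x})^{3}]\;=\;\frac{-\alpha\,M_{6}}{h'(\hat{x})^{3/2}}+o(\cdot)
\;=\;-\frac{M_{6}}{6}\,\frac{h''(\hat{x})}{h'(\hat{x})^{3}}+o(\cdot),
\]
while the bias correction contributes $-3c\,s^{2}=+\tfrac{3}{6}\,h''(\hat{x})/h'(\hat{x})^{3}+o(\cdot)$ (and $c^{3}$ is of higher order). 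Summing the two pieces and invoking $\psi''(t)=-h''(\hat{x})/h'(\hat{x})^{3}$ gives exactly
\[
\mu_{3}(t)\;\sim\;\frac{M_{6}-9}{6}\,\psi''(t),
\]
which is the announced equivalent (with coefficient $1$).

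The main obstacle, to my mind, is not the algebra above but the uniform control of the error term $\varepsilon(x,t)$ in \eqref{abelK(t)} and of $q(x)$ in the Laplace window, i.e.\ ensuring that the $o(\cdot)$ terms in the three moment formulas are genuinely negligible \emph{relative to} $\psi(t)$, $\psi'(t)$ and $\psi''(t)$ respectively. This is where the specific regularity built into the classes $R_{\beta}$ and $\widetilde{R_{0}}$ (conditions \eqref{3section104}, \eqref{3section103}, \eqref{3section1030}) and the bound \eqref{densityFunction01} on $q$ must be used: they force $h''(\hat{x})/h'(\hat{x})^{3/2}\to 0$ at a controlled rate and ensure the fourth-order remainder in the Taylor expansion of $g$ around $\hat{x}$ is of smaller order than the cubic term whose contribution we just isolated. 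I would therefore isolate these uniform estimates as technical lemmas (natural candidates for the postponed Section \ref{Chp2proof}) and then feed them into the Laplace expansion sketched above to complete the proof.
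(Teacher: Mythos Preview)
Your proposal is essentially the paper's own proof: a Laplace expansion of $\Phi(t)$ centred at $\hat{x}=\psi(t)$, rescaling by $\sigma=h'(\hat{x})^{-1/2}$, expanding the cubic term $e^{-\alpha y^{3}}$ to first order, computing moments of $X-\hat{x}$ via Gaussian integrals, and then shifting from $\hat{x}$ to $m(t)$ to obtain $\mu_{3}$. The paper packages the computation through the auxiliary quantities $\Psi(t,\alpha)=\int_{0}^{\infty}(x-\hat{x})^{\alpha}e^{tx}p(x)\,dx$ (its Lemma~\ref{3lemma1}), but this is exactly your $\E_{\pi_t}[(X-\hat{x})^{\alpha}]\cdot\Phi(t)$; and your ``technical lemmas'' are precisely the paper's Lemmas~\ref{3lemma00}--\ref{3lemma02}, which control $h'''(\hat{x})\sigma^{4}l^{4}$, $h''(\hat{x})\sigma^{3}l$, and $\xi/(h''(\hat{x})\sigma^{3})$ uniformly on the Laplace window.

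One arithmetic slip: in your bias correction you write $-3c\,s^{2}=+\tfrac{3}{6}\,h''(\hat{x})/h'(\hat{x})^{3}$, but with $c\sim -\tfrac{1}{2}h''(\hat{x})/h'(\hat{x})^{2}$ and $s^{2}\sim 1/h'(\hat{x})$ this term is $+\tfrac{9}{6}\,h''(\hat{x})/h'(\hat{x})^{3}$; with $3/6$ the sum would give the wrong coefficient $(M_{6}-3)/6$ rather than $(M_{6}-9)/6$. The paper's computation (expanding $(x-m(t))^{3}=(x-\hat{x}+\hat{x}-m(t))^{3}$ and collecting terms) makes this bookkeeping explicit and lands on $\tfrac{9-M_{6}}{6}h''(\hat{x})\sigma^{6}=\tfrac{M_{6}-9}{6}\psi''(t)$.
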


\begin{proof}[Proof]The proof of this result relies on a series of Lemmas. Lemmas $(\ref{3lemma0}%
)$, $(\ref{3lemma01})$, $(\ref{3lemma02})$ and $(\ref{3lemma1})$ are used in
the proof. Lemma $(\ref{3lemma00})$ is instrumental for Lemma $(\ref{3lemma1}%
)$. The proof of Theorem $\ref{order of s}$ and these Lemmas are postponed to
Section \ref{3appendix01}. \newline
\end{proof}

\begin{rem}
As a by-product of Theorem \ref{order of s}, we obtained the following Abel type result (see $(\ref{3moment001})$):
\begin{align*}
\Phi(t)=c\sqrt{2\pi}\sigma e^{K(\hat{x},t)}\big(1+o(1)\big),
\end{align*}
where $K(\hat{x},t)$ is defined as in $(\ref{abelK(t)})$ and $\sigma$ is defined in Section \ref{3appendix01}. It is easily verified that this result is in accordance with Theorem 4.12.11 of \cite{Bingham}, Theorem 3 of \cite{Borovkov} and Theorem 4.2 of \cite{Nagaev}.
\end{rem}

\begin{cor}
\label{3cor1} Let $p(x)$ be defined as in $(\ref{densityFunction})$ and
$h(x)\in\mathfrak{R}$. Then it holds as $t\rightarrow\infty$
\begin{align}
\label{3cor10g}\frac{\mu_{3}(t)}{s^{3}(t)}\longrightarrow0.
\end{align}

\end{cor}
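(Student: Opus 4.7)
The plan is to reduce the statement to an asymptotic estimate on $\psi=h^\leftarrow$, and then split into the two cases $h\in R_\beta$ and $h\in R_\infty$ allowed by the hypothesis $h\in\mathfrak{R}$.

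By Theorem \ref{order of s} we have $s^2(t)\sim\psi'(t)$ and $\mu_3(t)\sim\frac{M_6-9}{6}\psi''(t)$ as $t\to\infty$. Hence
\[
\frac{\mu_3(t)}{s^3(t)}\sim\frac{M_6-9}{6}\cdot\frac{\psi''(t)}{\bigl(\psi'(t)\bigr)^{3/2}},
\]
so it suffices to prove $\psi''(t)/(\psi'(t))^{3/2}\to0$.

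First I would treat the case $h\in R_\beta$. Writing $h(x)=x^\beta l(x)$ with $l\in R_0$, the inversion theorem for regularly varying functions gives $\psi\in R_{1/\beta}$; moreover condition (\ref{3section104}) (bounds on $x\epsilon'$ and $x^2\epsilon''$) is strong enough to guarantee that one may differentiate the representation of $\psi$ term by term and obtain $\psi'\in R_{1/\beta-1}$ and $\psi''\in R_{1/\beta-2}$. Consequently $\psi''/(\psi')^{3/2}\in R_\gamma$ with
\[
\gamma=\Bigl(\tfrac{1}{\beta}-2\Bigr)-\tfrac{3}{2}\Bigl(\tfrac{1}{\beta}-1\Bigr)=-\tfrac{\beta+1}{2\beta}<0,
\]
which forces the ratio to tend to $0$.

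Next I would handle the case $h\in R_\infty$, where $\psi\in\widetilde{R_0}$ admits the representation $\psi(t)=\exp\bigl(\int_1^t\epsilon(u)/u\,du\bigr)$ with $\epsilon(t)\to0$ and $\psi(t)\to\infty$. Differentiating gives $\psi'(t)=\psi(t)\epsilon(t)/t$, and a second differentiation together with (\ref{3section103}) (so that $\epsilon'(t)=o(\epsilon(t)/t)$) yields
\[
\psi''(t)=\psi(t)\frac{\epsilon(t)}{t^{2}}\Bigl(\epsilon(t)-1+o(1)\Bigr)\sim -\psi(t)\frac{\epsilon(t)}{t^{2}}.
\]
Therefore
\[
\frac{\psi''(t)}{(\psi'(t))^{3/2}}\sim -\frac{1}{\sqrt{\psi(t)\,t\,\epsilon(t)}}.
\]
To conclude I would invoke (\ref{3section1030}): $\epsilon(t)\ge c\,t^{-\eta}$ with $\eta\in(0,1/8)$, so $t\,\epsilon(t)\ge c\,t^{1-\eta}\to\infty$, and multiplying by $\psi(t)\to\infty$ the denominator blows up. The ratio therefore tends to $0$, finishing the proof.

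The main obstacle I expect is the second case: in the rapidly varying regime the naive leading term $(\epsilon(t)/t)^2$ in $\psi''$ is killed by cancellation, and one must isolate the genuinely dominant term $-\psi(t)\epsilon(t)/t^2$. The regularity assumptions (\ref{3section103})--(\ref{3section1030}) on $\epsilon$ are exactly what allow this cancellation to be made rigorous and what provide the quantitative lower bound on $\epsilon(t)$ needed at the end. The $R_\beta$ case, by contrast, reduces to bookkeeping of indices of regular variation once differentiability of the slowly varying factor is granted by (\ref{3section104}).
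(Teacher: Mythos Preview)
Your proposal is correct and follows the same underlying route as the paper: reduce via Theorem~\ref{order of s} to showing that $\psi''(t)/(\psi'(t))^{3/2}\to0$, then split into the two cases of $\mathfrak{R}$. The paper phrases the target quantity as $h''(\hat x)\sigma^{3}$ rather than $\psi''/(\psi')^{3/2}$, but since $\sigma^{2}=\psi'(t)$ and $h''(\hat x)=-\psi''(t)/(\psi'(t))^{3}$ these are identical up to sign. The main difference is one of economy: the paper simply cites Lemma~\ref{3lemma01} (specifically $h''(\hat x)\sigma^{3}l\to0$, hence $h''(\hat x)\sigma^{3}\to0$), where the estimates you redo by hand have already been established in the course of proving Theorem~\ref{order of s}. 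Your $R_\infty$ computation matches the paper's exactly; your $R_\beta$ case is correct in spirit but slightly hand-wavy where you assert $\psi'\in R_{1/\beta-1}$ and $\psi''\in R_{1/\beta-2}$ by ``differentiating term by term'' --- the paper avoids this by computing $h'$ and $h''$ directly from $h(x)=x^\beta l(x)$ and then inverting, which is cleaner since the regularity hypotheses~(\ref{3section104}) are stated for $h$, not for $\psi$.
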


\begin{proof}[Proof]Its proof relies on Theorem $\ref{order of s}$ and is also put in Section \ref{3appendix01}.
\end{proof}

\section{Edgeworth expansion under extreme normalizing factors}\label{secEdg}

\bigskip
With $\pi^{a_{n}}$ defined through
\begin{align}\label{tiltedDensity}
\pi^{a_{n}}(x)=\frac{e^{tx}p(x)}{\Phi(t)},
\end{align}
and $t$ determined by $m(t)=a_{n}$, define the normalized density of
$\pi^{a_{n}}$ by
\[
\bar{\pi}^{a_{n}}(x)=s\pi^{a_{n}}(sx+a_{n}),
\]
where $s$ is defined in Section \ref{secintroduction} (notice that it depends on $a_n$ here).
Denote the $n$-convolution of $\bar{\pi}^{a_{n}}(x)$ by $\bar{\pi}%
_{n}^{a_{n}}(x)$, and denote by $\rho_{n}$ the normalized density of
$n$-convolution $\bar{\pi}_{n}^{a_{n}}(x)$,
\[
\rho_{n}(x):=\sqrt{n}\bar{\pi}_{n}^{a_{n}}(\sqrt{n}x).
\]
The following result extends the local Edgeworth expansion of the distribution
of normalized sums of i.i.d. r.v.'s to the present context, where the summands
are generated under the density $\bar{\pi}^{a_{n}}$. Therefore the setting is
that of a triangular array of row-wise independent summands; the fact that
$a_{n}\rightarrow\infty$ makes the situation unusual. We mainly adapt Feller's
proof (Chapter 16, Theorem 2 $\cite{Feller71}$).

\begin{thm}
\label{3theorem1} With the above notation, uniformly upon $x$ it holds
\[
\rho_{n}(x)=\phi(x)\Big(1+\frac{\mu_{3}}{6\sqrt{n}s^{3}}\big(x^{3}%
-3x\big)\Big)+o\Big(\frac{1}{\sqrt{n}}\Big).
\]
where $\phi(x)$ is standard normal density.
\end{thm}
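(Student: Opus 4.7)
The plan is to adapt the characteristic-function proof of Feller's local Edgeworth theorem (Chapter 16, Theorem 2 of \cite{Feller71}) to the triangular-array situation. Let $t_n$ be determined by $m(t_n)=a_n$ and denote by $\varphi_n(u):=\int e^{iux}\bar\pi^{a_n}(x)\,dx$ the characteristic function of the row density. By construction $\bar\pi^{a_n}$ has mean $0$, variance $1$, and standardized third cumulant $\kappa_n:=\mu_3(t_n)/s^3(t_n)$; by Theorem \ref{order of s} together with Corollary \ref{3cor1} one has $\kappa_n\to 0$. The characteristic function of $\rho_n$ is $\hat\rho_n(t)=\varphi_n(t/\sqrt{n})^n$, and using the identity $\mathcal{F}^{-1}[(it)^3 e^{-t^2/2}](x)=(x^3-3x)\phi(x)$ together with Fourier inversion, the quantity to control is
\[
\rho_n(x)-\phi(x)\Big(1+\tfrac{\kappa_n}{6\sqrt{n}}(x^3-3x)\Big)=\frac{1}{2\pi}\int_{\mathbb{R}}e^{-itx}\Big[\hat\rho_n(t)-e^{-t^2/2}\Big(1+\tfrac{\kappa_n(it)^3}{6\sqrt{n}}\Big)\Big]\,dt.
\]
Since $|e^{-itx}|=1$, bounding the absolute value of this integral by $o(1/\sqrt{n})$ automatically gives the required uniformity in $x$.

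I would split the range of integration at $|t|=\delta\sqrt{n}$ for some small fixed $\delta>0$. On the central region $|t|\le\delta\sqrt{n}$, Taylor-expand $\log\varphi_n(u)=-u^2/2+\kappa_n(iu)^3/6+R_n(u)$ with $|R_n(u)|\le Cu^4$ for $|u|\le\delta$; the $u^4$ control requires a uniform fourth-moment bound on $\bar\pi^{a_n}$, which follows from the Laplace method that underlies Theorem \ref{order of s}. Substituting $u=t/\sqrt{n}$ and expanding $\exp(n\log\varphi_n(t/\sqrt{n}))$ produces pointwise a bound of the form
\[
\Big|\hat\rho_n(t)-e^{-t^2/2}\Big(1+\tfrac{\kappa_n(it)^3}{6\sqrt{n}}\Big)\Big|\le C\Big(\tfrac{1}{n}(t^4+t^6)+\tfrac{\varepsilon_n}{\sqrt{n}}(|t|^3+|t|^9)\Big)e^{-t^2/4},
\]
with $\varepsilon_n\to 0$. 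Integrating this over $|t|\le\delta\sqrt{n}$ yields a contribution of $o(1/\sqrt{n})$, the standard Feller computation.

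The main obstacle is the tail region $|t|>\delta\sqrt{n}$: Feller's classical argument invokes a fixed Cramer condition $\sup_{|u|\ge\delta}|\varphi(u)|<1$, which cannot be cited off the shelf here because the row density changes with $n$. I would re-establish the required bound by exploiting the concrete form
\[
\bar\pi^{a_n}(y)=s\,\pi^{a_n}(sy+a_n)=\frac{s}{\Phi(t_n)}\,e^{t_n(sy+a_n)}p(sy+a_n).
\]
For intermediate frequencies $\delta\sqrt{n}\le|t|\le M\sqrt{n}$, the Laplace approximation driving Theorem \ref{order of s} shows that $\bar\pi^{a_n}$ is close to $\phi$ in a quantitative $L^1$ sense, which yields $|\varphi_n(u)|\le e^{-cu^2}$ uniformly in $n$ on the corresponding $u$-window; for $|t|\ge M\sqrt{n}$, the smoothness and integrability of $p$ from (\ref{densityFunction})--(\ref{3section101}) combined with a direct change of variables in the defining integral for $\varphi_n$ produce a Riemann--Lebesgue-type decay $|\varphi_n(u)|\le C/(1+|u|)$ uniformly in $n$. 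Raising either bound to the $n$-th power and integrating produces a super-polynomially small tail contribution, certainly $o(1/\sqrt{n})$. Combining the central and tail estimates completes the proof.
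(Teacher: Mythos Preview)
Your scheme is the same Feller characteristic-function architecture the paper uses: Fourier inversion, split at $|t|=\delta\sqrt{n}$, Taylor expansion of $\log\varphi_n$ on the central window, and a separate tail estimate. On the central region the paper differs only in that it stops at a third-order Taylor expansion of $\gamma(\tau)=\log\varphi^{a_n}(\tau)+\tfrac12\tau^2$ and controls the remainder through continuity of $\gamma'''$ together with $\mu_3/s^3\to 0$ from Corollary~\ref{3cor1}, rather than invoking a uniform fourth-moment bound; either route is workable.

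The substantive difference is the tail. The paper does \emph{not} split further into intermediate and large frequencies. It instead proves two uniform-in-$n$ facts: (i) $\sup_{a_n}\int|\varphi^{a_n}(\tau)|^2\,d\tau<\infty$, obtained via Parseval and the result of Juszczak--Nagaev \cite{Nagaev} that $\sup_x|\bar\pi^{a_n}(x)-\phi(x)|\to 0$ (hence $\sup_{n,x}\bar\pi^{a_n}(x)<\infty$); and (ii) a uniform Cram\'er bound $\sup_{a_n,\,|u|\ge\omega}|\varphi^{a_n}(u)|\le N_\omega<1$. The tail integral is then controlled by $N_\omega^{\,n-2}\int|\varphi^{a_n}(\tau/\sqrt{n})|^2\,d\tau$, which decays geometrically.

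Your proposed route has a gap at the large-frequency step. The claim $|\varphi_n(u)|\le C/(1+|u|)$ \emph{uniformly in $n$} amounts to a uniform bound on the total variation of $\bar\pi^{a_n}$; but $\bar\pi^{a_n}(y)=s\,\pi^{a_n}(sy+a_n)$ involves the vanishing scale $s=s(t_n)\to 0$ and the exploding tilt $t_n\to\infty$, so ``smoothness and integrability of $p$'' together with a change of variables do not by themselves deliver this --- you would need to control $s\,|t_n-h(sy+a_n)|$ inside the $L^1$ norm of $(\bar\pi^{a_n})'$, which is extra work not covered by Theorem~\ref{order of s}. The paper's Parseval device avoids this entirely: it only needs $\sup_x\bar\pi^{a_n}(x)$ bounded, which follows directly from the cited uniform convergence to $\phi$. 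If you want your argument to go through, replace the $1/|u|$ claim by this $L^2$ bound (which your Laplace analysis can also supply, since it yields $\bar\pi^{a_n}\to\phi$ uniformly), and use it exactly as the paper does.
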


\begin{proof}[Proof]
The proof of this Theorem is postponed to
Section \ref{3appendix02}.
\end{proof}

\section{Gibbs' conditional principles under extreme events}\label{secGibbs}

We now explore Gibbs conditional principles under extreme events. The
result obtained is a pointwise approximation of the conditional density $p_{a_{n}%
}\left(  y_{1}^{k}\right)  $ on $\mathbb{R}^{k}$ for fixed $k.$

Fix $y_{1}^{k}:=\left(  y_{1},...,y_{k}\right)  $ in $\mathbb{R}^{k}$ and
define $s_{i}^{j}:=y_{i}+...+y_{j}$ for $1\leq i<j\leq k.$ Define $t$ through $m(t)=a_n$, similarly, define $t_{i}$ through
\begin{equation}
m(t_{i}):=\frac{na_{n}-s_{1}^{i}}{n-i}.\label{3lfd01}%
\end{equation}
For the sake of brevity, we write $m_{i}$ instead of $m(t_{i})$, and define
$s_{i}^{2}:=s^{2}(t_{i})$. Consider the following
condition%
\begin{equation}
\lim_{n\rightarrow\infty}\frac{\psi(t)^{2}}{\sqrt{n}\psi^{\prime}(t)
}=0,\label{croissance de a}%
\end{equation}
which can be seen as a growth condition on $a_{n}$, avoiding too large
increases of this sequence.

For $0\le i \le k-1$, define $z_{i}$ through
\begin{align*}
z_{i}=\frac{m_{i}-y_{i+1}}{s_{i} \sqrt{n-i-1}}.
\end{align*}

\begin{rem}

Formula $(\ref{croissance de a})$ states the precise behaviour of the sequence $a_n$ which defines the present extended Gibbs principle. In the case when the common density $p(x)$ is Weibull with shape parameter $k$, using Theorem $\ref{order of s}$, we obtain $\psi(t)\sim m(t)=a_n$ and $\psi^\prime(t)\sim a_n^{2-k}$. Replace $\psi(t)$ and $\psi^\prime(t)$ in $(\ref{croissance de a})$ by these two terms, we have
\begin{align*}
\lim_{n\rightarrow\infty} \frac{a_n^k}{\sqrt{n}}=0.
\end{align*}
This rate controls the growth of $a_n$ to infinity.

\end{rem}

\begin{lem}
\label{3lemma z} Assume that $p(x)$ satisfies $(\ref{densityFunction})$ and
$h(x)\in\mathfrak{R}$. Let $t_{i}$ be defined in $(\ref{3lfd01})$. Assume that
$a_{n}\rightarrow\infty$ as $n\rightarrow\infty$ and that
(\ref{croissance de a}) holds. Then as ${n}\rightarrow\infty$
\[
\lim_{n\rightarrow\infty}\sup_{0\leq i\leq k-1}z_{i}=0,\qquad \emph{and} \qquad \lim_{n\rightarrow\infty}\sup_{0\leq i\leq k-1}z_{i}^2=o\left(\frac{1}{\sqrt{n}}\right).
\]

\end{lem}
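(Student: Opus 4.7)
The plan is to replace $m_i$ and $s_i$ by their asymptotic equivalents from Theorem \ref{order of s}, compare $t_i$ with $t$, and then invoke the growth condition (\ref{croissance de a}) directly.

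First, I would observe that since $y_1^k$ is fixed and $i\le k-1$, a direct computation gives
\[
m_i - a_n = \frac{i\,a_n - s_1^i}{n-i} = O\!\left(\frac{a_n}{n}\right),
\]
so $m_i\sim a_n$ uniformly in $0\le i\le k-1$, and in particular $m_i - y_{i+1}\sim a_n$. By Theorem \ref{order of s}, $m(t_i)\sim\psi(t_i)$, hence $\psi(t_i)\sim a_n\sim\psi(t)$ uniformly in $i$.

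Next, I would transfer this asymptotic equivalence to the derivative $\psi'$: since $\psi=h^{\leftarrow}$ is monotone and satisfies the regularity conditions inherited from $h\in\mathfrak{R}$, the equivalence $\psi(t_i)\sim\psi(t)$ forces $t_i\sim t$, and the smoothness assumptions in the definition of $R_\beta$ or of $\widetilde{R_0}$ (namely $(\ref{3section104})$ or $(\ref{3section103})$--$(\ref{3section1030})$) imply that $\psi'(t_i)\sim\psi'(t)$ uniformly in $i$. Combining with $s_i^2\sim\psi'(t_i)$ from Theorem \ref{order of s}, we obtain $s_i^2\sim\psi'(t)$ uniformly. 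Therefore, uniformly in $0\le i\le k-1$,
\[
z_i^2 = \frac{(m_i-y_{i+1})^2}{s_i^2(n-i-1)} \sim \frac{\psi(t)^2}{n\,\psi'(t)} = \frac{1}{\sqrt{n}}\cdot\frac{\psi(t)^2}{\sqrt{n}\,\psi'(t)}.
\]
By hypothesis (\ref{croissance de a}), the second factor tends to $0$, which yields both $z_i^2 = o(1/\sqrt{n})$ and $z_i\to 0$ uniformly in $i$, since the sup is over a fixed finite range.

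The main obstacle is the step $\psi'(t_i)\sim\psi'(t)$: transferring asymptotic equivalence from $\psi$ to $\psi'$ requires the full strength of the regularity assumptions on $h$. In the regularly varying case $h\in R_\beta$ this is routine (standard Karamata-type differentiation), but in the rapidly varying case $h\in R_\infty$ it is subtler, and one must exploit $(\ref{3section103})$ together with the lower bound $(\ref{3section1030})$ to control the relative oscillation of $\psi'$ on the range $[t_i,t]$. Once this is done, the remainder of the argument is just bookkeeping and an application of the growth condition.
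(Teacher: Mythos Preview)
Your overall strategy coincides with the paper's: reduce $z_i$ to $\psi(t_i)/\sqrt{n\psi'(t_i)}$ via Theorem~\ref{order of s}, show $\psi(t_i)\sim\psi(t)$ and $\psi'(t_i)\sim\psi'(t)$, and then read off both conclusions from the growth condition~(\ref{croissance de a}). The final computation $z_i^2\sim \psi(t)^2/(n\psi'(t))=o(1/\sqrt{n})$ is exactly how the paper finishes.

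There is, however, a genuine gap at the step ``$\psi(t_i)\sim\psi(t)$ forces $t_i\sim t$''. This inference is valid in the case $h\in R_\beta$, since then $\psi$ is regularly varying of index $1/\beta>0$ and asymptotic inversion applies. But when $h\in R_\infty$, the inverse $\psi\in\widetilde{R_0}$ is \emph{slowly} varying, and for slowly varying functions $\psi(t_i)\sim\psi(t)$ carries no information about $t_i/t$ whatsoever (take $\psi(u)=\log u$: then $\psi(2t)\sim\psi(t)$ while $2t\not\sim t$). So you cannot pass through $t_i\sim t$ on the basis of the qualitative equivalence $\psi(t_i)\sim\psi(t)$ alone, and your paragraph about ``relative oscillation of $\psi'$ on the range $[t_i,t]$'' presupposes exactly the control that is missing.

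The paper avoids this trap by never asserting $t_i\sim t$. In Case~1 it writes $\psi'(u)=1/h'(\psi(u))$ with $h'(x)=x^{\beta-1}l_0(x)(\beta+\epsilon(x))$ and uses the slow variation of $l_0$ together with $\psi(t_i)\sim\psi(t)$ to get $\psi'(t_i)/\psi'(t)\to 1$ directly. In Case~2 it Taylor-expands $\psi(t_i)-\psi(t)$ to extract the intermediate estimate $\dfrac{\epsilon(t)}{t}(t_i-t)\to 0$ from $(\psi(t_i)-\psi(t))/\psi(t)\to 0$, and then Taylor-expands $\psi'(t_i)-\psi'(t)$, whose leading term is $-\dfrac{\psi(t)\epsilon(t)}{t^2}(t_i-t)$; this is exactly $\psi'(t)$ times the quantity just shown to vanish, which is what drives $\psi'(t_i)\sim\psi'(t)$. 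If you want to salvage your route through $t_i\sim t$, you must use the \emph{quantitative} information $m_i-a_n=O(a_n/n)$ that you already recorded, together with $m'(u)=s^2(u)\sim\psi'(u)$ and the growth condition, rather than the bare equivalence $\psi(t_i)\sim\psi(t)$.
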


\begin{proof}
The proof of this Lemma is postponed in Section \ref{3appendix03}.
\end{proof}

\begin{thm}
\label{point conditional density} With the same notation and hypotheses as in Lemma \ref{3lemma z}, it holds
\[
p_{a_{n}}(y_{1}^{k})=p(X_{1}^{k}=y_{1}^{k}|S_{1}^{n}=na_{n})=g_{m}(y_{1}%
^{k})\Big(1+o\big(\frac{1}{\sqrt{n}}\big)\Big),
\]
with
\[
g_{m}(y_{1}^{k})=\prod_{i=0}^{k-1}\Big(\pi^{m_{i}}(X_{i+1}=y_{i+1})\Big).
\]

\end{thm}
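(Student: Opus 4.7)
The plan is to combine the chain rule for conditional densities with a separate exponential tilt at each step, then apply the Edgeworth expansion of Theorem~\ref{3theorem1} to the resulting convolution ratios. First, by independence of the $X_i$, I telescope
\[
p_{a_n}(y_1^k)=\prod_{i=1}^{k}\frac{p(X_i=y_i)\,p(S_{i+1}^n=na_n-s_1^i)}{p(S_i^n=na_n-s_1^{i-1})}.
\]
For the $i$-th factor I would tilt the three densities by the common parameter $t_{i-1}$; the calibration $m(t_{i-1})=m_{i-1}$ makes $na_n-s_1^{i-1}$ equal to the mean $(n-i+1)m_{i-1}$ of the $(n-i+1)$-fold tilted convolution. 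The exponential weights $e^{-t_{i-1}y_i}$, $e^{-t_{i-1}(na_n-s_1^i)}$ and $e^{-t_{i-1}(na_n-s_1^{i-1})}$ cancel exactly since $s_1^i-s_1^{i-1}=y_i$, and the powers of $\Phi(t_{i-1})$ cancel likewise. This reduces the $i$-th factor to
\[
\pi^{m_{i-1}}(X_i=y_i)\cdot\frac{\pi^{m_{i-1}}_{n-i}(na_n-s_1^i)}{\pi^{m_{i-1}}_{n-i+1}(na_n-s_1^{i-1})},
\]
whose product over $i=1,\dots,k$ yields $g_m(y_1^k)$ times $k$ convolution ratios that remain to be analyzed.

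Next I would apply Theorem~\ref{3theorem1} with tilting parameter $m_{i-1}$ in place of $a_n$ and sample sizes $n-i+1$ or $n-i$. The denominator is evaluated at the mean of its tilted convolution, so its normalized argument is $0$; the numerator is evaluated at a point whose normalized coordinate is precisely $z_{i-1}=(m_{i-1}-y_i)/(s_{i-1}\sqrt{n-i})$ from Lemma~\ref{3lemma z}. Since $z_{i-1}\to 0$ and $z_{i-1}^2=o(1/\sqrt n)$, one has $\phi(z_{i-1})=\phi(0)(1+o(1/\sqrt n))$; and since Corollary~\ref{3cor1} gives $\mu_3(t_{i-1})/s_{i-1}^3\to 0$ while $z_{i-1}^3-3z_{i-1}\to 0$, the cubic Edgeworth term is itself $o(1/\sqrt n)$ at both arguments. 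Combined with $\sqrt{(n-i+1)/(n-i)}=1+O(1/n)$, each of the $k$ ratios equals $1+o(1/\sqrt n)$, and since $k$ is fixed this order is preserved through the finite product, giving the claim.

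The main obstacle is justifying uniformity: Theorem~\ref{3theorem1} must apply with the varying tilting parameter $m_{i-1}$, and the $o(1/\sqrt n)$ error term must be uniform in $i\in\{1,\dots,k\}$. This reduces to checking that the growth condition (\ref{croissance de a}) transfers from $a_n$ to each $m_{i-1}$, which follows from $m_{i-1}/a_n\to 1$ (a consequence of the boundedness of $y_1^k$ together with $a_n/n\to 0$, which is implicit in (\ref{croissance de a})) and the regular-variation properties of $\psi$ and $\psi'$. Once this uniformity is in hand, the chain-rule decomposition, the exponential cancellation, and the local Edgeworth expansion combine to give the stated asymptotic for $p_{a_n}(y_1^k)$.
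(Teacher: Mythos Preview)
Your proposal is correct and follows essentially the same route as the paper: the chain-rule factorization, the step-by-step tilt at parameter $t_{i-1}$ (the paper phrases the resulting cancellation as an ``invariance property'' of the conditional density under tilting, but this is exactly your explicit exponential cancellation), and the application of Theorem~\ref{3theorem1} together with Lemma~\ref{3lemma z} and Corollary~\ref{3cor1} to show each convolution ratio is $1+o(1/\sqrt n)$. Your final paragraph on transferring condition~(\ref{croissance de a}) from $a_n$ to each $m_{i-1}$ is a uniformity check that the paper does not make explicit; the paper simply applies the Edgeworth expansion at each tilt without comment.
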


\begin{proof}Using Bayes formula,
\begin{align}
p_{a_{n}}\left(  y_{1}^{k}\right)  &:=p(X_{1}^{k}=y_{1}^{k}|S_{1}^{n}=na_{n})\nonumber\\
&  =p(X_{1}=y_{1}|S_{1}^{n}=na_{n})\prod_{i=1}^{k-1}p(X_{i+1}=y_{i+1}%
|X_{1}^{i}=y_{1}^{i},S_{1}^{n}=na_{n})\nonumber\label{bayes formula01}\\
&  =\prod_{i=0}^{k-1}p(X_{i+1}=y_{i+1}|S_{i+1}^{n}=na_{n}-s_{1}^{i}).
\end{align}
We make use of the following invariance property: for all $y_{1}^{k}$ and all
$\alpha>0$%
\[
p(X_{i+1}=y_{i+1}|X_{1}^{i}=y_{1}^{i},S_{1}^{n}=na_{n})=\pi^{\alpha}%
(X_{i+1}=y_{i+1}|X_{1}^{i}=y_{1}^{i},S_{1}^{n}=na_{n})
\]
where on the LHS, the r.v's $X_{1}^{i}$ are sampled i.i.d. under $p$ and on
the RHS, sampled i.i.d. under $\pi^{\alpha}.$ It thus holds

\begin{align}
&  p(X_{i+1}=y_{i+1}|S_{i+1}^{n}=na_{n}-S_{1}^{i})=\pi^{m_{i}}(X_{i+1}%
=y_{i+1}|S_{i+1}^{n}=na_{n}-s_{1}^{i})\nonumber\label{bayes formula}\\
&  =\pi^{m_{i}}(X_{i+1}=y_{i+1})\frac{\pi^{m_{i}}(S_{i+2}^{n}=na_{n}%
-s_{1}^{i+1})}{\pi^{m_{i}}(S_{i+1}^{n}=na_{n}-s_{1}^{i})}\nonumber\\
&  =\frac{\sqrt{n-i}}{\sqrt{n-i-1}}\pi^{m_{i}}(X_{i+1}=y_{i+1})\frac
{\widetilde{\pi_{n-i-1}}(\frac{m_{i}-y_{i+1}}{s_{i}\sqrt{n-i-1}})}%
{\widetilde{\pi_{n-i}}(0)},
\end{align}
where $\widetilde{\pi_{n-i-1}}$ is the normalized density of $S_{i+2}^{n}$
under i.i.d. sampling with the density $\pi^{m_{i}};$ correspondingly, $\widetilde
{\pi_{n-i}}$ is the normalized density of $S_{i+1}^{n}$ under the same
sampling. Note that a r.v. with density $\pi^{m_i}$ has expectation $m_{i}$ and
variance $s_{i}^{2}$.

Write $z_{i}=\frac{m_{i}-y_{i+1}}{s_{i}\sqrt{n-i-1}}$, and perform a
third-order Edgeworth expansion of $\widetilde{\pi_{n-i-1}}(z_{i})$, using
Theorem $\ref{3theorem1}$. It follows
\begin{align}\label{edgeworth exte}
\widetilde{\pi_{n-i-1}}(z_{i})=\phi(z_{i})\Big(1+\frac{\mu_{3}^{i}}{6s_{i}%
^{3}\sqrt{n-1}}(z_{i}^{3}-3z_{i})\Big)+o\Big(\frac{1}{\sqrt{n}}\Big),
\end{align}
The approximation of $\widetilde{\pi_{n-i}}(0)$ is obtained from
$(\ref{edgeworth exte})$
\begin{align}\label{pi Z=0}
\widetilde{\pi_{n-i}}(0)=\phi(0)\Big(1+o\big(\frac{1}{\sqrt{n}}\big)\Big).
\end{align}
Put $(\ref{edgeworth exte})$ and $(\ref{pi Z=0})$ into $(\ref{bayes formula})
$ to obtain
\begin{align}
&  p(X_{i+1}=y_{i+1}|S_{i+1}^{n}=na_{n}-S_{1}^{i}%
)\nonumber\label{3conditionaldensity01}\\
&  =\frac{\sqrt{n-i}}{\sqrt{n-i-1}}\pi^{m_{i}}(X_{i+1}=y_{i+1})\frac
{\phi(z_{i})}{\phi(0)}\Big[1+\frac{\mu_{3}^{i}}{6s_{i}^{3}\sqrt{n-1}}%
(z_{i}^{3}-3z_{i})+o\Big(\frac{1}{\sqrt{n}}\Big)\Big]\nonumber\\
&  =\frac{\sqrt{2\pi(n-i)}}{\sqrt{n-i-1}}\pi^{m_{i}}(X_{i+1}=y_{i+1}%
){\phi(z_{i})}\big(1+R_{n}+o(1/\sqrt{n})\big),
\end{align}
where
\[
R_{n}=\frac{\mu_{3}^{i}}{6s_{i}^{3}\sqrt{n-1}}(z_{i}^{3}-3z_{i}).
\]

Under condition $(\ref{croissance de a})$, using Lemma $\ref{3lemma z}$, it holds
$z_{i}\rightarrow0$ as $a_{n}\rightarrow\infty$, and under Corollary
$(\ref{3cor1})$, $\mu_{3}^{i}/s_{i}^{3}\rightarrow0.$ This yields
\[
R_{n}=o\big(1/\sqrt{n}\big),
\]
which, combined with $(\ref{3conditionaldensity01})$, gives
\begin{align*}
&  p(X_{i+1}=y_{i+1}|s_{i+1}^{n}=na_{n}-S_{1}^{i})=\frac{\sqrt{2\pi(n-i)}%
}{\sqrt{n-i-1}}\pi^{m_{i}}(X_{i+1}=y_{i+1}){\phi(z_{i})}\big(1+o(1/\sqrt
{n})\big)\\
&  =\frac{\sqrt{n-i}}{\sqrt{n-i-1}}\pi^{m_{i}}(X_{i+1}=y_{i+1}){\big(1-z_{i}%
^{2}/2+o(z_{i}^{2})\big)}\big(1+o(1/\sqrt{n})\big),
\end{align*}
where we use a Taylor expansion in the second equality. Using once more Lemma
$\ref{3lemma z}$, under conditions $(\ref{croissance de a})$, we have as $a_{n}\rightarrow\infty$
\[
z_{i}^{2}=o(1/\sqrt{n}),
\]
hence we get
\[
p(X_{i+1}=y_{i+1}|S_{i+1}^{n}=na_{n}-s_{1}^{i})=\frac{\sqrt{n-i}}{\sqrt
{n-i-1}}\pi^{m_{i}}(X_{i+1}=y_{i+1})\big(1+o(1/\sqrt{n})\big),
\]
which together with $(\ref{bayes formula01})$ yields
\begin{align*}
p(X_{1}^{k}=y_{1}^{k}|S_{1}^{n}=na_{n}) &  =\prod_{i=0}^{k-1}\Big(\frac
{\sqrt{n-i}}{\sqrt{n-i-1}}\pi^{m_{i}}(X_{i+1}=y_{i+1})\big(1+o(1/\sqrt
{n})\big)\Big)\\
&=\prod_{i=0}^{k-1}\Big(\pi^{m_i}(X_{i+1}=y_{i+1})%
 \Big)\prod_{i=0}^{k-1}\Big(\frac{\sqrt{ n-i}}{\sqrt{n-i-1}}\Big)\prod_{i=0}^{k-1}\Big(
 1+o\big(\frac{1}{\sqrt{n}}\big)\Big)\\
 &=\Big(
 1+o\big(\frac{1}{\sqrt{n}}\big)\Big)\prod_{i=0}^{k-1}\Big(\pi^{m_i}(X_{i+1}=y_{i+1})%
 \Big),
\end{align*}
which completes the proof.
\end{proof}

\bigskip
In the present case, namely for fixed k, an equivalent statement is (its proof is similar to the proof of Theorem \ref{point conditional density}, we refer to Broniatowski and Cao \cite{BRCAO2012} for more details.)

\begin{thm}
\label{point conditional density e} Under the same notation and hypotheses as in the
previous Theorem, it holds
\[
p_{a_{n}}(y_{1}^{k})=p(X_{1}^{k}=y_{1}^{k}|S_{1}^{n}=na_{n})=g_{a_{n}}%
(y_{1}^{k})\Big(1+o\big(\frac{1}{\sqrt{n}}\big)\Big),
\]
with
\[
g_{a_{n}}(y_{1}^{k})=\prod_{i=1}^{k}\Big(\pi^{a_{n}}(X_{i}=y_{i})\Big).
\]

\end{thm}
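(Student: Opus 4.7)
The plan is to reproduce the architecture of the proof of Theorem \ref{point conditional density}, but to tilt with the single fixed parameter $t$ satisfying $m(t)=a_n$ at every step rather than with the moving parameters $t_i$. I would again start from the Bayes factorization
\[
p_{a_n}(y_1^k) = \prod_{i=0}^{k-1} p(X_{i+1}=y_{i+1}\mid S_{i+1}^n = na_n - s_1^i),
\]
and apply the tilting invariance in the form
\[
p(X_{i+1}=y_{i+1}\mid S_{i+1}^n = na_n - s_1^i) = \pi^{a_n}(X_{i+1}=y_{i+1})\,\frac{\pi^{a_n}(S_{i+2}^n = na_n - s_1^{i+1})}{\pi^{a_n}(S_{i+1}^n = na_n - s_1^i)}.
\]

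Next I would switch to standardized densities. Under $\pi^{a_n}$ every summand has mean $a_n$ and variance $s^2:=s^2(t)$, so the two normalized evaluation points are
\[
\tilde z_i := \frac{(i+1)a_n - s_1^{i+1}}{s\sqrt{n-i-1}}, \qquad \tilde w_i := \frac{ia_n - s_1^i}{s\sqrt{n-i}}.
\]
Since $y_1^k$ and $k$ are fixed, both are $O(a_n/(s\sqrt n)) = O(\psi(t)/\sqrt{n\,\psi'(t)})$, so by condition (\ref{croissance de a}) their squares are $o(1/\sqrt n)$ uniformly in $i \in \{0,\ldots,k-1\}$. This is the literal analogue of Lemma \ref{3lemma z} and follows from the same estimate, simply with $a_n$ playing the role formerly played by $m_i$.

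With these bounds secured, I would apply the Edgeworth expansion of Theorem \ref{3theorem1} to the $(n-i-1)$- and $(n-i)$-fold convolutions of $\bar\pi^{a_n}$, and invoke Corollary \ref{3cor1} to discard the cubic term in $\mu_3/s^3$. Both expansions then collapse to $\phi(\tilde z_i)(1+o(1/\sqrt n))$ and $\phi(\tilde w_i)(1+o(1/\sqrt n))$, and a first-order Taylor expansion of $\phi$ at zero (using $\tilde z_i^2,\tilde w_i^2 = o(1/\sqrt n)$) yields $\phi(\tilde z_i)/\phi(\tilde w_i) = 1+o(1/\sqrt n)$. After the $\sqrt{2\pi}$ factors cancel and one collapses the telescoping ratio $\sqrt{n-i}/\sqrt{n-i-1}$, whose product over $i=0,\ldots,k-1$ equals $\sqrt{n/(n-k)} = 1+o(1/\sqrt n)$, the product of the $k$ Bayes factors delivers $p_{a_n}(y_1^k) = g_{a_n}(y_1^k)(1+o(1/\sqrt n))$.

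The main obstacle, and the reason that using the fixed tilt $\pi^{a_n}$ is non-trivial, is precisely that the two evaluation points $\tilde z_i$ and $\tilde w_i$ are no longer centered at the conditional mean: they are shifted by $ia_n - s_1^i = O(a_n)$, which is genuinely large. Checking that the growth condition (\ref{croissance de a}) is sharp enough to push $\tilde z_i^2$ and $\tilde w_i^2$ below the $1/\sqrt n$ threshold, and that no new error appears from the displaced argument of $\phi$, is the key quantitative step that makes the argument go through.
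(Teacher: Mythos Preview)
Your proposal is correct and follows exactly the route the paper indicates: the paper does not give a full proof of Theorem~\ref{point conditional density e} but states that it is similar to the proof of Theorem~\ref{point conditional density} (referring to \cite{BRCAO2012} for details), and what you outline is precisely that adaptation---same Bayes factorization, same tilting invariance, same Edgeworth expansion from Theorem~\ref{3theorem1} and Corollary~\ref{3cor1}, with the only change being that you tilt at the fixed $t$ with $m(t)=a_n$ throughout, so that the denominator in the ratio is now evaluated at a nonzero point $\tilde w_i$ as well. Your verification that $\tilde z_i^2,\tilde w_i^2=o(1/\sqrt n)$ under (\ref{croissance de a}), and the telescoping of the $\sqrt{(n-i)/(n-i-1)}$ factors, are exactly the steps needed.
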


\begin{rem}
The above result shows that asymptotically the point condition $\left(
S_{1}^{n}=na_{n}\right)  $ leaves blocks of $k$ of the $X_{i}^{\prime}s$
independent. Obviously this property does not hold for large values of $k,$
close to $n.$ A similar statement holds in the LDP range, conditioning either
on $\left(  S_{1}^{n}=na\right)  $ (see Diaconis and Friedman 1988)), or on
$\left(  S_{1}^{n}\geq na\right)  $; see Csiszar 1984 for a general statement
on asymptotic conditional independence.
\end{rem}

\bigskip

\section{Proofs}\label{Chp2proof}

\subsection{Proofs of Theorem \ref{order of s} and Corollary \ref{3cor1}}\label{3appendix01}

For density functions $p(x)$ defined in $(\ref{densityFunction})$ satisfying
also $h(x)\in\mathfrak{R}$, denote by $\psi(x)$ the reciprocal function of
$h(x)$ and $\sigma^{2}(v)=\big(h^{\prime}(v)\big)^{-1}$, $v\in\mathbb{R}_{+}$.
For brevity, we write $\hat{x},\sigma,l$ instead of $\hat{x}(t),\sigma
\big(\psi(t)\big),l(t)$.

When $t$ is given, $K(x,t)$ attain its maximum at $\hat{x}=\psi(t)$. The
fourth order Taylor expansion of $K(x,t)$ on $x\in[\hat{x}-\sigma l,\hat
{x}+\sigma l]$ yields
\begin{align}
\label{3abeltheorem001}K(x,t)=K(\hat{x},t)-\frac{1}{2}h^{\prime}(\hat
{x})\big(x-\hat{x}\big)^{2} -\frac{1}{6}h^{\prime\prime}(\hat{x}%
)\big(x-\hat{x}\big)^{3}+\varepsilon(x,t),
\end{align}
with some $\theta\in (0,1)$
\begin{align}
\label{3abel01}\varepsilon(x,t)=-\frac{1}{24}h^{^{\prime\prime\prime}}%
\big(\hat{x}+\theta(x-\hat{x})\big)(x-\hat{x})^{4}.
\end{align}

\bigskip
For proving Theorem \ref{order of s} and Corollary \ref{3cor1}, we state firstly the following Lemmas.

\begin{lem}
\label{3lemma00} For $p(x)$ in $(\ref{densityFunction})$, $h(x)\in
\mathfrak{R}$, it holds when $t\rightarrow\infty$,
\begin{align}
\label{3section20211}\frac{|\log\sigma\big(\psi(t)\big)|}{\int_{1}^{t}
\psi(u)du}\longrightarrow0.
\end{align}

\end{lem}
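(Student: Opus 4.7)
The plan is to reduce the quantity to an estimate on $\psi'(t)$ via the inverse function relation, and then to bound numerator and denominator separately in each of the two subclasses $R_\beta$ and $R_\infty$ that make up $\mathfrak{R}$. Differentiating the identity $h(\psi(t))=t$ gives $h'(\psi(t))\psi'(t)=1$, so $\sigma^{2}(\psi(t))=1/h'(\psi(t))=\psi'(t)$ and consequently
$$|\log\sigma(\psi(t))|=\tfrac{1}{2}|\log\psi'(t)|.$$
The task therefore reduces to showing that $|\log\psi'(t)|$ grows much more slowly than $\int_{1}^{t}\psi(u)\,du$.

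For the numerator, I would show that $|\log\psi'(t)|=O(\log t)$ in both subclasses. When $h\in R_{\beta}$, $\psi$ is regularly varying with index $1/\beta$ and the regularity assumption $(\ref{3section104})$ legitimates term-by-term differentiation of $\psi(t)=t^{1/\beta}\tilde l(t)$ with $\tilde l\in R_{0}$, yielding $\psi'(t)=(\psi(t)/(\beta t))(1+o(1))$, so $\log\psi'(t)=O(\log t)$. When $h\in R_{\infty}$, $\psi\in\widetilde{R_{0}}$ with representation $\psi(t)=\exp(\int_{1}^{t}\epsilon(u)/u\,du)$, so $\psi'(t)=\psi(t)\epsilon(t)/t$. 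Since $\epsilon(u)\to 0$ one has $\log\psi(t)=\int_{1}^{t}\epsilon(u)/u\,du=o(\log t)$, and condition $(\ref{3section1030})$ forces $\epsilon(t)\geq ct^{-\eta}$ for large $t$, giving $|\log\epsilon(t)|\leq\eta\log t+O(1)$. Thus $|\log\psi'(t)|\leq(1+\eta)\log t+o(\log t)$, and in both subclasses $|\log\sigma(\psi(t))|=O(\log t)$.

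For the denominator, I would exploit that $\psi(u)\to\infty$ in both subclasses. In the $R_{\beta}$ case Karamata's theorem gives $\int_{1}^{t}\psi(u)\,du\sim\frac{\beta}{\beta+1}t\psi(t)$, which is regularly varying of index $1+1/\beta>1$ and hence grows at least polynomially in $t$. In the $R_{\infty}$ case $\psi$ is slowly varying and tends to infinity, so the monotonicity-type bound
$$\int_{1}^{t}\psi(u)\,du\;\geq\;\int_{t/2}^{t}\psi(u)\,du\;\geq\;\tfrac{t}{2}\,\psi(t/2)$$
together with $\psi(t/2)\sim\psi(t)\to\infty$ shows that the denominator grows faster than any linear function of $t$. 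In either subclass the denominator dominates any $O(\log t)$ term, so the ratio tends to zero.

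The main obstacle I anticipate lies in the $R_{\infty}$ subcase: since $\psi$ is slowly varying, the denominator grows only slightly faster than $t$, so one cannot be cavalier on the numerator side. The decisive role of the lower bound $(\ref{3section1030})$ on $\epsilon$ is to keep $|\log\epsilon(t)|$ from blowing up faster than $O(\log t)$, which is precisely what is needed to win over the modest growth of $\int_{1}^{t}\psi(u)\,du$. The $R_{\beta}$ case is comparatively routine, as Karamata's theorem directly yields polynomial growth in the denominator against a logarithmic numerator.
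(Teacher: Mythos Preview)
Your proposal is correct and follows essentially the same route as the paper: split into the two subclasses, bound the numerator by $O(\log t)$ (using the representation of $h'$ in $R_\beta$ and the identity $\psi'(t)=\psi(t)\epsilon(t)/t$ together with $(\ref{3section1030})$ in $R_\infty$), and show the denominator grows at least like $t\psi(t)$. The only cosmetic differences are that you work uniformly through $\sigma^{2}(\psi(t))=\psi'(t)$ in both cases (the paper computes $h'(x)$ directly for $R_\beta$) and that for the $R_\infty$ denominator you use the elementary lower bound $\int_{t/2}^{t}\psi\geq (t/2)\psi(t/2)$ whereas the paper cites the Karamata-type asymptotic $\int_{1}^{t}\psi(u)\,du\sim t\psi(t)$.
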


\begin{proof}[Proof] If $h(x)\in R_{\beta}$, by Theorem $(1.5.12)$ of $\cite{Bingham}$,
there exists some slowly varying function such that it holds $\psi(x)\sim
x^{1/\beta}l_{1}(x) $. Hence as $t\rightarrow\infty$ (see
\cite{Feller71}, Chapter 8)
\begin{align}
\label{3section202}\int_{1}^{t} \psi(u)du \sim t^{1+\frac{1}{\beta}}
l_{1}(t).
\end{align}
On the other hand, $h^{\prime}(x)= x^{\beta-1}l(x)\big(\beta+\epsilon
(x)\big)$, thus we have as $x\rightarrow\infty$
\begin{align*}
|\log\sigma(x)| & =\big|\log\big(h^{\prime}(x)\big)^{-\frac{1}{2}%
}\big|=\Big|\frac{1}{2}\big((\beta-1)\log x+\log l(x)+\log(\beta
+\epsilon(x))\big)\Big|\le\frac{1}{2}(\beta+1)\log x,
\end{align*}
set $x=\psi(t)$, then when $t\rightarrow\infty$, it holds $x<2 t^{1/\beta
}l_{1}(t)<t^{1/\beta+1}$, hence we get
\begin{align*}
|\log\sigma\big(\psi(t)\big)|<\frac{(\beta+1)^{2}}{2\beta} \log t,
\end{align*}
which, together with $(\ref{3section202})$, yields $(\ref{3section20211})$.

If $h(x)\in R_{\infty}$, then by definition $\psi(x)\in\widetilde{R_{0}}$ is
slowly varying as $x\rightarrow\infty$, and as $t\rightarrow\infty
$ (see \cite{Feller71}, Chapter 8)
\begin{align}
\label{3section203}\int_{1}^{t} \psi(u)du \sim t\psi(t).
\end{align}
Additionally, we have $h^{\prime}(x)=1/\psi^{\prime}(t)$ with $x=\psi(t)$, it follows
\begin{align*}
|\log\sigma(x)|=\big|\log\big(h^{\prime}(x)\big)^{-\frac{1}{2}}\big|=\frac
{1}{2} |\log\psi^{\prime}(t)|.
\end{align*}
Since $\psi(t)\in\widetilde{R_{0}}$, it holds
\begin{align}
\label{3section204}|\log\sigma\big(\psi(t)\big)| & =\frac{1}{2} |\log
\psi^{\prime}(t)|=\frac{1}{2} \Big|\log\Big(\psi(t)\frac{\epsilon(t)}%
{t}\Big)\Big| \le\log t+\frac{1}{2} |\log\epsilon(t)|\le2\log t,
\end{align}
where last inequality follows from $(\ref{3section1030})$. $(\ref{3section203}%
)$ and $(\ref{3section204})$ imply $(\ref{3section20211})$. This completes the proof.
\end{proof}

\begin{lem}
\label{3lemma0} For $p(x)$ in $(\ref{densityFunction})$, $h\in\mathfrak{R}$,
then for any varying slowly function $l(t)\rightarrow\infty$ as $t\rightarrow
\infty$, it holds
\begin{align}
\label{3section201}\sup_{|x|\le\sigma l}{h^{\prime\prime\prime}(\hat{x}%
+x)}\sigma^{4} l^{4}\longrightarrow0 \qquad as \quad t\rightarrow\infty.
\end{align}

\end{lem}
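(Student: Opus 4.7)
The plan is to split the argument into the two cases $h\in R_\beta$ and $h\in R_\infty$ and, in each case, to reduce the problem to controlling $|h'''(\hat x)|\,\sigma^4\,l^4$ rather than the supremum. The preliminary reduction rests on the observation that $\sigma l/\hat x\to 0$ as $t\to\infty$ in both cases: in $R_\beta$ this follows from $\sigma\asymp \hat x^{-(\beta-1)/2}\,l_h(\hat x)^{-1/2}$ together with $\hat x\to\infty$, and in $R_\infty$ from the identity $\sigma/\hat x = \sqrt{\epsilon(t)/(t\psi(t))}$ together with $\epsilon(t)\to 0$ and $t\psi(t)\to\infty$. Once the window shrinks relative to $\hat x$, the smoothness of $h'''$ allows one to bound $|h'''(\hat x+x)|$ by a constant multiple of $|h'''(\hat x)|$ uniformly on $|x|\le\sigma l$, absorbing the supremum into a multiplicative constant.

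In the case $h\in R_\beta$, writing $h(x)=x^\beta l_h(x)$ and invoking the bounds $x|\epsilon'(x)|=O(1)$, $x^2|\epsilon''(x)|=O(1)$ guaranteed by the definition of $R_\beta$, a direct computation yields
\[
h'(x)\sim\beta\,x^{\beta-1}\,l_h(x),\qquad |h'''(x)|=O\bigl(x^{\beta-3}\,l_h(x)\bigr).
\]
Hence $|h'''(\hat x)|\,\sigma^4 = O\bigl(\hat x^{-(\beta+1)}/l_h(\hat x)\bigr)$, and since $\hat x=\psi(t)\sim t^{1/\beta}\,l_1(t)$ grows polynomially in $t$, this expression decays as a strictly negative power of $t$ times a slowly varying factor. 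It therefore dominates the slowly varying factor $l(t)^4$ and the whole product tends to zero.

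The case $h\in R_\infty$ is the main technical obstacle. Differentiating the identity $h(\psi(t))=t$ three times produces
\[
h'(\hat x)=\frac{1}{\psi'(t)},\qquad h'''(\hat x) = -\frac{\psi'''(t)}{\psi'(t)^4}+\frac{3\psi''(t)^2}{\psi'(t)^5}.
\]
From the exponential representation of $\psi\in\widetilde{R_{0}}$ one has $\psi'(t)=\psi(t)\epsilon(t)/t$, and iterating the differentiation while discarding subleading contributions via the bounds $t\epsilon'(t)/\epsilon(t)\to 0$ and $t^2\epsilon''(t)/\epsilon(t)\to 0$ yields
\[
\psi''(t)\sim -\frac{\psi(t)\epsilon(t)}{t^2},\qquad \psi'''(t)\sim \frac{2\psi(t)\epsilon(t)}{t^3}.
\]
Substituting gives $|h'''(\hat x)|\,\sigma^4\asymp 1/\bigl(t\,\psi(t)\,\epsilon(t)\bigr)$, and the lower bound $\epsilon(t)\gtrsim t^{-\eta}$ with $\eta\in(0,1/8)$ provided by the definition of $\widetilde{R_{0}}$ then forces $t\psi(t)\epsilon(t)\gtrsim t^{1-\eta}\psi(t)\to\infty$ strictly faster than any slowly varying $l(t)^4$, which closes the argument.

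The step I expect to be the main obstacle is the iterated differentiation of $\psi$ in the $R_\infty$ case. The two leading contributions to $h'''(\hat x)$ partially cancel, so one must track terms involving $\epsilon'$ and $\epsilon''$ carefully and repeatedly invoke the $\widetilde{R_{0}}$ bounds to show they are negligible relative to the leading $\epsilon/t$-scale terms. The final step, in which $l(t)^4$ is defeated by $t^{1-\eta}\psi(t)$, depends crucially on the mild lower bound on $\epsilon$ from the definition of $\widetilde{R_{0}}$; without it no polynomial gain in $t$ would be available and the conclusion could fail for sufficiently pathological slowly varying $l$.
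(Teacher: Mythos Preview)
Your $R_\beta$ case and the final asymptotics in both cases are correct and match the paper's computations line for line. The gap is the preliminary reduction you state for the $R_\infty$ case: you claim that once $\sigma l/\hat x\to 0$, ``the smoothness of $h'''$'' lets you bound $\sup_{|x|\le\sigma l}|h'''(\hat x+x)|$ by a constant multiple of $|h'''(\hat x)|$. For $h\in R_\infty$ this inference is not valid as stated. The function $h$, and with it $h'''$, is rapidly varying in the spatial variable, so relative smallness of the window $\sigma l/\hat x\to 0$ does not by itself control the ratio $h'''(\hat x+x)/h'''(\hat x)$; for rapidly varying $f$ one can have $y_n/x_n\to 1$ while $f(y_n)/f(x_n)\to\infty$.

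The paper closes this gap not by appealing to smoothness of $h'''$ in $x$ but by changing variable to $t$: it writes $\hat x+x=\psi(t+v)$ and uses your own formula $h'''(\psi(s))\sim s\,\psi(s)^{-3}\epsilon(s)^{-3}$, whose ingredients are now slowly varying in $s$. Rather than arguing that this is asymptotic to its value at $s=t$, the paper absorbs $\epsilon(s)^{-3}$ into a power of $s$ via the hypothesis $\liminf s^\eta\epsilon(s)>0$, obtaining $\sup_{|v|\le t/4}|h'''(\psi(t+v))|\le C\,t^{11/8}/\psi(t)^3$ and hence $\sup\cdot\,\sigma^4\lesssim \epsilon(t)^2/\bigl(t^{5/8}\psi(t)\bigr)\to 0$, which swallows any slowly varying $l(t)^4$. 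It then still has to check that the spatial window $|x|\le\sigma l$ is contained in the $v$-window $|v|\le t/4$, i.e.\ that $\sigma l\le\min\bigl\{\psi(t)-\psi(3t/4),\,\psi(5t/4)-\psi(t)\bigr\}$; this is done via a first-order expansion of $\psi$ and once again the lower bound on $\epsilon$. Both pieces of this argument are absent from your outline, and the window inclusion in particular cannot be skipped: it is another place where the hypothesis $\liminf t^\eta\epsilon(t)>0$ is genuinely used.
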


\begin{proof}[Proof] \textbf{Case 1:} $h\in R_{\beta}$. We have $h(x)=x^{\beta}l_{0}(x) ,
l_{0}(x)\in R_{0},\beta>0$. Then
\begin{align}
\label{mu3 10101}h^{^{\prime\prime}}(x)=\beta(\beta-1)x^{\beta-2}%
l_{0}(x)+2\beta x^{\beta-1}l_{0}^{^{\prime}}(x)+x^{\beta}l_{0}^{^{\prime
\prime}}(x).
\end{align}
and
\begin{align}
\label{mu3 1010}h^{^{\prime\prime\prime}}(x)=\beta(\beta-1)(\beta
-2)x^{\beta-3}l_{0}(x)+3\beta(\beta-1)x^{\beta-2}l_{0}^{^{\prime}}(x)+3\beta
x^{\beta-1}l_{0}^{^{\prime\prime}}(x)+x^{\beta}l_{0}^{^{\prime\prime\prime}%
}(x).
\end{align}
Since $l_0\in R_{0}$, it is easy to obtain
\begin{align}
\label{mu3 10102}l_{0}^{^{\prime}}(x)=\frac{l_{0}(x)}{x}\epsilon(x),\qquad
l_{0}^{^{\prime\prime}}(x)=\frac{l_{0}(x)}{x^{2}}\big(\epsilon^{2}%
(x)+x\epsilon^{\prime}(x)-\epsilon(x)\big),
\end{align}
and
\begin{align*}
l_{0}^{^{\prime\prime\prime}}(x)=\frac{l_{0}(x)}{x^{3}}\big(\epsilon
^{3}(x)+3x\epsilon^{\prime}(x)\epsilon(x)-3\epsilon^{2}(x) -2x\epsilon
^{^{\prime}}(x)+2\epsilon(x)+x^{2}\epsilon^{^{\prime\prime}}(x)\big).
\end{align*}
Under condition $(\ref{3section104})$, there exists some positive constant $Q
$ such that it holds
\begin{align*}
|l_{0}^{^{\prime\prime}}(x)|\le Q\frac{l_{0}(x)}{x^{2}},\qquad|l_{0}%
^{^{\prime\prime\prime}}(x)|\le Q\frac{l_{0}(x)}{x^{3}},
\end{align*}
which, together with $(\ref{mu3 1010})$, yields with some positive constant
$Q_{1}$
\begin{align}
\label{mu3 1011}|h^{^{\prime\prime\prime}}(x)|\le Q_{1} \frac{h(x)}{x^{3}}.
\end{align}
By definition, we have $\sigma^{2}(x)=1/h^{^{\prime}}(x)=x/\big(h(x)(\beta
+\epsilon(x))\big)$, thus it follows
\begin{align}
\label{mu3 1013}\sigma^{2}=\sigma^{2}(\hat{x})=\frac{\hat{x}}{h(\hat{x}%
)(\beta+\epsilon(\hat{x}))}=\frac{\psi(t)}{t(\beta+\epsilon(\psi(t)))}%
=\frac{\psi(t)}{\beta t}\big(1+o(1)\big),
\end{align}
this implies $\sigma l=o(\psi(t))=o(\hat{x})$. Thus we get with
$(\ref{mu3 1011})$
\begin{align}
\label{mu3 1014}\sup_{|x|\le\sigma l}|h^{^{\prime\prime\prime}}(\hat{x}%
+x)|\le\sup_{|x|\le\sigma l} Q_{1} \frac{h(\hat{x}+x)}{(\hat{x}+x)^{3}}\le
Q_{2} \frac{t}{\psi^{3}(t)},
\end{align}
where $Q_{2}$ is some positive constant. Combined with $(\ref{mu3 1013})$, we
obtain
\begin{align*}
\sup_{|x|\le\sigma l}|h^{^{\prime\prime\prime}}(\hat{x}+x)|\sigma^{4} l^{4}
\le Q_{2} \frac{t}{\psi^{3}(t)}\sigma^{4}l^{4}=\frac{Q_{2}l^{4}}{\beta^{2} t
\psi(t)}\longrightarrow0,
\end{align*}
as sought.

\textbf{Case 2:} $h\in R_{\infty}$. Since $\hat{x}=\psi(t)$, we have
$h(\hat{x})=t$. Thus it holds
\begin{align}
\label{mu3 0010}h^{\prime}(\hat{x})=\frac{1}{\psi^{\prime}(t)}\qquad and \quad
h^{\prime\prime}(\hat{x})=-\frac{\psi^{\prime\prime}(t)}{\big(\psi^{\prime
}(t)\big)^{3}},
\end{align}
further we get
\begin{align}
\label{mu3 001}h^{\prime\prime\prime}(\hat{x})=-\frac{\psi^{^{\prime
\prime\prime}}(t)\psi^{^{\prime}}(t)-3\big(\psi^{^{\prime\prime}}(t)\big)^{2}%
}{\big(\psi^{\prime}(t)\big)^{5}}.
\end{align}
Notice if $h(\hat{x})\in R_{\infty}$, then $\psi(t) \in\widetilde{R_{0}}$.
Therefore we obtain
\begin{align}
\label{mu3 002}\psi^{^{\prime}}(t)=\frac{\psi(t)}{t}\epsilon(t),
\end{align}
and
\begin{align}
\label{mu3 003}\psi^{^{\prime\prime}}(t) & =-\frac{\psi(t)}{t^{2}}%
\epsilon(t)\Big(1-\epsilon(t)-\frac{t\epsilon^{^{\prime}}(t)}{\epsilon
(t)}\Big)=-\frac{\psi(t)}{t^{2}}\epsilon(t)\big(1+o(1)\big) \qquad as \quad
t\rightarrow\infty,
\end{align}
where last equality holds from $(\ref{3section103})$. Using
$(\ref{3section103})$ once again, we have also $\psi^{^{\prime\prime\prime}%
}(t)$
\begin{align}
\label{mu3 004}\psi^{^{\prime\prime\prime}}(t) & =\frac{\psi(t)}{t^{3}%
}\epsilon(t)\Big(2+\epsilon^{2}(t)+3t\epsilon^{^{\prime}}(t)-3\epsilon
(t)-\frac{2t\epsilon^{^{\prime}}(t)}{\epsilon(t)}+\frac{t^{2}\epsilon
^{^{\prime\prime}}(t)}{\epsilon(t)}\Big)\nonumber\\
& =\frac{\psi(t)}{t^{3}}\epsilon(t)\big(2+o(1)\big) \qquad as \quad
t\rightarrow\infty.
\end{align}
Put $(\ref{mu3 002})$ $(\ref{mu3 003})$ and $(\ref{mu3 004})$ into
$(\ref{mu3 001})$ we get
\begin{align*}
h^{^{\prime\prime\prime}}(\hat{x})=\frac{t}{\psi^{3}(t)\epsilon^{3}%
(t)}\big(1+o(1)\big)
\end{align*}
Thus by $(\ref{3section1030})$ as $t\rightarrow\infty$
\begin{align}
\label{mu3 005}\sup_{|v|\le t/4}h^{\prime\prime\prime}\big(\psi(t+v)\big) &
=\sup_{|v|\le t/4}\frac{t+v}{\psi^{3}(t+v)\epsilon^{3}(t+v)}%
\big(1+o(1)\big) \le\sup_{|v|\le t/4}\frac{2{(t+v)^{11/8}}}{\psi^{3}(t+v)}\le\frac{4{t}^{11/8}%
}{\psi^{3}(t)},
\end{align}
where last inequality holds from the slowly varying propriety: $\psi
(t+v)\sim\psi(t)$. Using $\sigma=\big(h^{^{\prime}}(\hat{x})\big)^{-1/2}$, it
holds
\begin{align*}
\sup_{|v|\le t/4}{h^{^{\prime\prime\prime}}\big(\psi(t+v)\big)}\sigma^{4}
\le\frac{4{t}^{11/8}}{\psi^{3}(t)}\frac{1}{({h^{^{\prime}}(\hat{x})})^{2}}%
=\frac{4{t}^{11/8}}{\psi^{3}(t)} \frac{\psi^{2}(t)\epsilon^{2}(t)}{t^{2}}%
=\frac{4\epsilon^{2}(t)}{\psi(t)t^{5/8}}\longrightarrow0.
\end{align*}
Hence for any slowly varying function $l(t)\rightarrow\infty$ it holds as $t\rightarrow
\infty$
\begin{align*}
\sup_{|v|\le t/4}{h^{^{\prime\prime\prime}}\big(\psi(t+v)\big)} \sigma^{4}
l^{4} \longrightarrow0.
\end{align*}
Consider $\psi(t)\in\widetilde{R_{0}}$, thus $\psi(t)$ is increasing, we have
the relation
\begin{align*}
\sup_{|v|\le t/4}{h^{^{\prime\prime\prime}}\big(\psi(t+v)\big)}=\sup
_{|\zeta|\le[\zeta_{1},\zeta_{2}]}h^{^{\prime\prime\prime}}(\hat{x}+\zeta),
\end{align*}
where
\begin{align*}
\zeta_{1}=\psi(3t/4)-\hat{x}, \qquad\zeta_{2}=\psi(5t/4)-\hat{x}.
\end{align*}
Hence we have showed
\begin{align*}
\sup_{|\zeta|\le[\zeta_{1},\zeta_{2}]}{h^{^{\prime\prime\prime}}(\hat{x}%
+\zeta)} \sigma^{4} l^{4} \longrightarrow0.
\end{align*}
For completing the proof, it remains to show
\begin{align}
\label{mu3 0060}\sigma l \le\min(|\zeta_{1}|,\zeta_{2}) \qquad as \quad
t\rightarrow\infty.
\end{align}
Perform first order Taylor expansion of $\psi(3t/4)$ at $t$, for some
$\alpha\in[0,1]$, it holds
\begin{align*}
\zeta_{1} & =\psi(3t/4)-\hat{x}=\psi(3t/4)-\psi(t)=-\psi^{^{\prime}%
}\big(t-\alpha{t}/{4}\big) \frac{t}{4}=-\frac{\psi\big(t-\alpha t/{4}%
\big)}{4-\alpha}\epsilon\big(t-\alpha t/{4}\big),
\end{align*}
thus using $(\ref{3section1030})$ and slowly varying propriety of $\psi(t)$ we
get as $t\rightarrow\infty$
\begin{align}
\label{mu3 007}|\zeta_{1}|\ge\frac{\psi\big(t-\alpha t/{4}\big)}{4}%
\epsilon\big(t-\alpha t/{4}\big)\ge\frac{\psi(t)}{5}\epsilon\big(t-\alpha
t/{4}\big)\ge\frac{\psi(t)}{5t^{1/8}}.
\end{align}
On the other hand, we have $\sigma=\big(h^{^{\prime}}(\hat{x})\big)^{-1/2}%
=\big(\psi(t)\epsilon(t)/t\big)^{1/2}$, which, together with $(\ref{mu3 007}%
)$, yields
\begin{align*}
\frac{\sigma}{|\zeta_{1}|}\le5\sqrt{\frac{\epsilon(t)}{\psi(t)\sqrt{t}}%
}\longrightarrow0 \qquad as \quad t\rightarrow\infty,
\end{align*}
which implies for any slowly varying function $l(t)$ it holds $\sigma
l=o(|\zeta_{1}|)$. By the same way, it is easy to show $\sigma l=o(\zeta_{2}%
)$. Hence $(\ref{mu3 0060})$ holds, as sought.
\end{proof}

\begin{lem}
\label{3lemma01} For $p(x)$ in $(\ref{densityFunction})$, $h\in\mathfrak{R}$,
then for any varying slowly function $l(t)\rightarrow\infty$ as $t\rightarrow
\infty$, it holds
\begin{align}
\label{3section201}\sup_{|x|\le\sigma l}\frac{h^{\prime\prime\prime}(\hat
{x}+x)}{h^{\prime\prime}(\hat{x})}\sigma l\longrightarrow0 \qquad as \quad
t\rightarrow\infty,
\end{align}
and
\begin{align}
\label{3section2010}h^{^{\prime\prime}}(\hat{x})\sigma^{3} l \longrightarrow0, \qquad h^{^{\prime\prime}}(\hat{x})\sigma^{4}  \longrightarrow0.
\end{align}

\end{lem}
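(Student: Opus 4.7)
The plan is to prove all three limits by splitting into the cases $h\in R_\beta$ and $h\in R_\infty$, in each case combining an asymptotic for $h''(\hat x)$ with the estimates for $\sigma^2$ and $\sup_{|x|\le\sigma l}|h'''(\hat x+x)|$ already assembled in the proof of Lemma \ref{3lemma0}.

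For $h\in R_\beta$, I would write $h(x)=x^{\beta}l(x)$ with $l\in R_0$ and differentiate twice using $l'(x)=l(x)\epsilon(x)/x$, obtaining
\[
h''(x)=x^{\beta-2}l(x)\bigl[\beta(\beta-1)+(2\beta-1)\epsilon(x)+\epsilon^2(x)+x\epsilon'(x)\bigr].
\]
When $\beta\ne 1$ the leading term gives $h''(\hat x)\sim \beta(\beta-1)\,t/\psi^2(t)$; combined with $\sigma^2\sim\psi(t)/(\beta t)$ from (\ref{mu3 1013}) and $\sup_{|x|\le\sigma l}|h'''(\hat x+x)|\le Q_2 t/\psi^3(t)$ from (\ref{mu3 1014}), routine substitution reduces $h''(\hat x)\sigma^4$, $h''(\hat x)\sigma^3 l$ and $\sup_{|x|\le\sigma l}|h'''(\hat x+x)|/|h''(\hat x)|\cdot\sigma l$ to quantities of orders $1/t$, $l/\sqrt{t\psi(t)}$ and $l/\sqrt{t\psi(t)}$ respectively, each tending to $0$. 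In the degenerate case $\beta=1$ the leading coefficient vanishes and one must retain the $\epsilon$-corrections and use (\ref{3section104}) to obtain matching (though smaller) orders for both $|h''(\hat x)|$ and $|h'''|$.

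For $h\in R_\infty$ I would use the identities (\ref{mu3 001})--(\ref{mu3 004}) to deduce $h''(\hat x)=(1+o(1))\,t/(\psi^2(t)\epsilon^2(t))$, and combine this with $\sigma^2=\psi(t)\epsilon(t)/t$ together with the uniform bound $\sup_{|x|\le\sigma l}|h'''(\hat x+x)|\le 4\,t^{11/8}/\psi^3(t)$ derived in (\ref{mu3 005})--(\ref{mu3 0060}). Direct substitution yields $h''(\hat x)\sigma^4=(1+o(1))/t$, $h''(\hat x)\sigma^3 l=(1+o(1))\,l/\sqrt{t\psi(t)\epsilon(t)}$, and the ratio of order $l\,\epsilon^{5/2}(t)/(\psi^{1/2}(t)\,t^{1/8})$; all three tend to $0$ thanks to $\psi(t)\to\infty$ and the polynomial lower bound $\liminf x^{\eta}\epsilon(x)>0$ of (\ref{3section1030}), which ensures $t\epsilon(t)\to\infty$ despite $\epsilon(t)\to 0$.

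The main obstacle is the need for a matching lower bound on $|h''(\hat x)|$, since Lemma \ref{3lemma0} supplies only an upper bound on $|h'''|$. This is most delicate in the degenerate $R_\beta$ case $\beta=1$, where the leading $\beta(\beta-1)$-term vanishes and one has to track the balance between $\epsilon(x)/x$ and $\epsilon'(x)$ via the second-order control in (\ref{3section104}); in the $R_\infty$ case the asymptotic involves the small factor $\epsilon^{-2}(t)$, and one must invoke (\ref{3section1030}) to verify that this factor is tamed by the gains available in the numerators.
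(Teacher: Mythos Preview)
Your approach is essentially the same as the paper's: the paper also splits into $R_\beta$ and $R_\infty$, recycles the estimates (\ref{mu3 1013})--(\ref{mu3 1014}) respectively (\ref{mu3 001})--(\ref{mu3 0060}) from the proof of Lemma~\ref{3lemma0}, supplies the missing asymptotic for $h''(\hat x)$ (namely $h''(\hat x)=\bigl(\beta(\beta-1)+o(1)\bigr)\psi(t)^{\beta-2}l_0(\psi(t))$ in the first case and $h''(\hat x)=(1+o(1))\,t/(\psi^2(t)\epsilon^2(t))$ in the second), and then substitutes to obtain exactly the orders you compute: $l/\sqrt{t\psi(t)}$ in $R_\beta$ and $l\,\epsilon^{5/2}(t)/(t^{1/8}\sqrt{\psi(t)})$, $l/\sqrt{t\psi(t)\epsilon(t)}$ in $R_\infty$.

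The one substantive difference is your treatment of $\beta=1$. You correctly flag that when $\beta=1$ the leading coefficient $\beta(\beta-1)$ vanishes and the asymptotic for $h''(\hat x)$ degenerates, so that the \emph{ratio} in (\ref{3section201}) requires a matched lower bound on $|h''(\hat x)|$ coming from the $\epsilon$-terms and (\ref{3section104}). The paper's proof does not separate this case: it writes $h''(\hat x)=\bigl(\beta(\beta-1)+o(1)\bigr)\psi(t)^{\beta-2}l_0(\psi(t))$ and then divides by the factor $\psi(t)^{\beta-2}l_0(\psi(t))$ as if the constant in front were bounded away from zero. So your proposal is in fact more careful here than the paper; what you identify as the ``main obstacle'' is a genuine subtlety that the published argument glosses over, while for $\beta\neq 1$ and for $R_\infty$ your proof and the paper's coincide.
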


\begin{proof}[Proof] \textbf{Case 1:} $h\in R_\beta.$ Using $(\ref{mu3 10101})$ and $(\ref{mu3 10102})$, we
get $h^{^{\prime\prime}}(x)=\big(\beta(\beta-1)+o(1)\big)x^{\beta-2} l_{0}(x)$
as $x\rightarrow\infty$, where $l_{0}(x)\in R_{0}$. Hence it holds
\begin{align}
\label{3lem230}h^{^{\prime\prime}}(\hat{x})=\big(\beta(\beta-1)+o(1)\big)\psi
(t)^{\beta-2} l_{0}(\psi(t)),
\end{align}
which, together with $(\ref{mu3 1013} )$ and $(\ref{mu3 1014} )$, yields with
some positive constant $Q_{3}$
\begin{align*}
\sup_{|x|\le\sigma l}\Big|\frac{h^{\prime\prime\prime}(\hat{x}+x)}{h^{\prime\prime}(\hat{x})}\sigma l\Big|\le Q_{3} \frac{t}{\psi^{3}(t)}
\frac{1}{\psi(t)^{\beta-2} l_{0}(\psi(t))}\sqrt{\frac{\psi(t)}{\beta t}}l
=\frac{Q_3}{\sqrt{\beta}} \frac{\sqrt{t}}{\psi(t)^{\beta+1/2}l_0(\psi
(t))}l.
\end{align*}
Notice $\psi(t)\sim t^{1/\beta}l_{1}(t)$ for some slowly varying function
$l_{1}(t)$, then it holds $\sqrt{t}l=o\big(\psi(t)^{\beta+1/2}\big)$. Hence we
get $(\ref{3section201})$.

From $(\ref{mu3 1013} )$ and $(\ref{3lem230})$, we obtain as $t\rightarrow
\infty$
\begin{align}
\label{3hfe1}h^{^{\prime\prime}}(\hat{x})\sigma^{3} l  & =\big(\beta
(\beta-1)+o(1)\big)\psi(t)^{\beta-2} l_{0}(\psi(t))\Big(\frac{\psi(t)}{\beta
t}\Big)^{3/2}l\nonumber\\
& =\big(\beta(\beta-1)+o(1)\big) \frac{\psi(t)^{\beta-1/2}}{\beta^{3/2}
t^{3/2}}l_{0}(\psi(t))l\sim \frac{\beta-1}{\beta^{1/2}}    \frac{l_{1}(t)^{\beta-1/2}}{t^{1/2+1/2\beta}}l_{0}(\psi(t))l
\end{align}
This implies the first formula of $(\ref{3section2010})$ holds.

\textbf{Case 2:} $h\in R_\infty.$ Using $(\ref{mu3 0010})$ and $(\ref{mu3 003})$ we obtain
\begin{align}
\label{mu3 006}h^{\prime\prime}(\hat{x})=-\frac{\psi^{\prime\prime}%
(t)}{\big(\psi^{\prime}(t)\big)^{3}} =\frac{t}{\psi^{2}(t)\epsilon^{2}%
(t)}\big(1+o(1)\big).
\end{align}
Combine $(\ref{mu3 005})$ and $(\ref{mu3 006})$, using $\sigma
=\big(h^{^{\prime}}(\hat{x})\big)^{-1/2}$, we have as $t\rightarrow\infty$
\begin{align*}
\sup_{|v|\le t/4}\frac{h^{^{\prime\prime\prime}}\big(\psi(t+v)\big)}
{h^{^{\prime\prime}}(\hat{x})}\sigma\le\frac{5\epsilon^{2}(t)t^{3/8}}{\psi(t)}%
\frac{1}{\sqrt{h^{^{\prime}}(\hat{x})}}=\frac{5\epsilon(t)^{5/2}}{t^{1/8}\sqrt{\psi
(t)}}\rightarrow0,
\end{align*}
where $\epsilon(t)\rightarrow0$ and $\psi(t)$ varies slowly. Hence for
arbitrarily slowly varying function $l(t)$ it holds as $t\rightarrow\infty$
\begin{align*}
\sup_{|v|\le t/4}\frac{h^{^{\prime\prime\prime}}\big(\psi(t+v)\big)}
{h^{^{\prime\prime}}(\hat{x})}\sigma l \longrightarrow0.
\end{align*}
Define $\zeta_{1},\zeta_{2}$ as in Lemma $\ref{3lemma0}$, we have showed
\begin{align*}
\sup_{|\zeta|\le[\zeta_{1},\zeta_{2}]}\frac{h^{^{\prime\prime\prime}}(\hat
{x}+\zeta)} {h^{^{\prime\prime}}(\hat{x})}\sigma l \longrightarrow0.
\end{align*}
$(\ref{3section201})$ is obtained by using $(\ref{mu3 0060})$. Using
$(\ref{mu3 006})$, for any slowly varying function, it holds
\begin{align*}
h^{^{\prime\prime}}(\hat{x})\sigma^{3} l\sim\frac{l}{\sqrt{\psi(t)\epsilon(t)t}%
}\longrightarrow0.
\end{align*}

 By the same method as proving $h^{^{\prime\prime}}(\hat{x})\sigma^{3}l\rightarrow 0$, it is easy to get for \textbf{Case 1} and \textbf{Case 2}
\begin{align*}
h^{^{\prime\prime}}(\hat{x})\sigma^{4}\longrightarrow 0.
\end{align*}
Hence the proof.
\end{proof}

\begin{lem}
\label{3lemma02} For $p(x)$ in $(\ref{densityFunction})$, $h\in\mathfrak{R}$,
then for any slowly varying function $l(t)\rightarrow\infty$ as $t\rightarrow
\infty$, it holds
\begin{align*}
\sup_{y\in[-l,l]}\frac{|\xi(\sigma y+\hat{x},t)|}{h^{^{\prime\prime}}(\hat
{x})\sigma^{3}}\longrightarrow0,
\end{align*}
where $\xi(x,t)=\varepsilon(x,t)+q(x)$.
\end{lem}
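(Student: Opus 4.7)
The plan is to split $\xi = \varepsilon + q$ and estimate the two contributions to the supremum separately, using the previous lemmas as black boxes.

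For the Taylor remainder $\varepsilon$, I would plug the Lagrange form (\ref{3abel01}) into $x = \sigma y + \hat{x}$, giving
\[
\sup_{y\in[-l,l]} |\varepsilon(\sigma y+\hat{x},t)| \le \frac{1}{24}\sup_{|v|\le \sigma l}\bigl|h'''(\hat{x}+v)\bigr|\,\sigma^{4}l^{4}.
\]
After dividing by $h''(\hat{x})\sigma^{3}$, the left-hand side is bounded, up to a constant, by $\sup_{|v|\le \sigma l}\frac{|h'''(\hat{x}+v)|}{h''(\hat{x})}\,\sigma l^{4}$. The key observation is that the conclusion of Lemma \ref{3lemma01} is universal in the slowly varying function $l\to\infty$. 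Since $l^{4}$ is again slowly varying and $\{|v|\le \sigma l\}\subset\{|v|\le \sigma l^{4}\}$ for $t$ large, applying Lemma \ref{3lemma01} with $l^{4}$ in place of $l$ yields exactly the vanishing of the displayed quantity.

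For the $q$ contribution, I would invoke the regularity hypothesis (\ref{densityFunction01}) at $x=\hat{x}$: the estimate $\sigma l = o(\hat{x})$ already obtained inside the proof of Lemma \ref{3lemma0} ensures $\sigma l < \vartheta \hat{x}$ for $t$ large enough, whence for every $y\in[-l,l]$
\[
|q(\sigma y+\hat{x})|\le \frac{1}{\hat{x}\sqrt{h(\hat{x})}}=\frac{1}{\psi(t)\sqrt{t}}.
\]
It then suffices to verify that $\frac{1}{\psi(t)\sqrt{t}\,h''(\hat{x})\sigma^{3}}\to 0$ in each regime, by direct substitution of the asymptotic formulas already derived. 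For $h\in R_{\beta}$, combining (\ref{mu3 1013}) with (\ref{3lem230}) gives $|h''(\hat{x})|\sigma^{3}$ of order $(t\psi(t))^{-1/2}$, so the ratio is of order $\psi(t)^{-1/2}$. For $h\in R_{\infty}$, combining (\ref{mu3 006}) with $\sigma^{2}=\psi(t)\epsilon(t)/t$ gives $h''(\hat{x})\sigma^{3}$ of order $(\psi(t)\epsilon(t)t)^{-1/2}$, so the ratio is of order $\sqrt{\epsilon(t)/\psi(t)}$. Both vanish as $t\to\infty$.

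The principal technical obstacle is the first step: the fourth-order Taylor remainder produces an $l^{4}$ penalty that Lemma \ref{3lemma01} cannot absorb at face value. Recognising that the lemma holds for \emph{every} slowly varying $l$, and hence may be re-invoked with $l^{4}$ in place of $l$, is what makes the estimate go through. Once this is settled, the $q$ part reduces to plugging in the asymptotic identities already assembled in the preceding lemmas.
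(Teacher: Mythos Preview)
Your proposal is correct and follows essentially the same route as the paper. The paper likewise splits $\xi=\varepsilon+q$; for the $\varepsilon$-part it observes that Lemma~\ref{3lemma01} ``can be slightly modified'' to read $\sup_{|x|\le\sigma l}\frac{h'''(\hat x+x)}{h''(\hat x)}\,\sigma l^{4}\to 0$, which is exactly your trick of re-invoking the lemma with $l^{4}$ in place of $l$; for the $q$-part it verifies $\sigma l=o(\psi(t))$ separately in each regime and then plugs in the same asymptotics for $h''(\hat x)\sigma^{3}$ that you cite, arriving at the same vanishing ratios (your bound $1/(\psi(t)\sqrt{t})$ is in fact the one that follows directly from~(\ref{densityFunction01}); the paper writes $1/(t\sqrt{\psi(t)})$, but the computations go through either way).
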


\begin{proof}[Proof] A close look to the proof of Lemma $\ref{3lemma01}$, it is straightforward that $(\ref{3section201})$ can be slightly modified as
\begin{align*}
\sup_{|x|\le\sigma l}\frac{h^{\prime\prime\prime}(\hat
{x}+x)}{h^{\prime\prime}(\hat{x})}\sigma l^4\longrightarrow0 \qquad as \quad
t\rightarrow\infty.
\end{align*}
Hence for $y\in[-l,l]$, by $(\ref{3abel01})$ and Lemma $\ref{3lemma01}$ it holds as $t\rightarrow\infty$
\begin{align}
\label{3lemma0301}\left|\frac{\varepsilon(\sigma y+\hat{x},t)}{h^{^{\prime\prime}%
}(\hat{x})\sigma^{3}}\right|\le\sup_{|x|\le\sigma l}\Big|\frac{h^{\prime\prime\prime
}(\hat{x}+x)}{h^{\prime\prime}(\hat{x})}\Big|\sigma l^4\longrightarrow0.
\end{align}
Under condition $(\ref{densityFunction01})$, set $x=\psi(t)$, we get
\begin{align*}
\sup_{|v-\psi(t)|\le\vartheta\psi(t)}|q(v)|\le\frac{1}{t\sqrt{\psi(t)}}.
\end{align*}

Then we show
\begin{align}\label{fqdq}
\left|\frac{q(\sigma y+\hat{x})}{h^{\prime\prime}(\hat{x})\sigma^{3}}\right| \longrightarrow 0.
\end{align}

\textbf{Case 1:} $h\in R_\beta.$ We have $h(x)=x^\beta l_0(x) , l_0(x)\in R_0,\beta>0$. Hence
\begin{align*}
h^{'}(x)=x^{\beta-1}l_0(x)\big(\beta+\epsilon(x)\big).
\end{align*}
Notice $\psi^{'}(t)=1/h^{'}\big(\psi(t)\big)$, it holds as $t\rightarrow\infty$
\begin{align*}
\frac{\sigma l}{\vartheta\psi(t)}&=\frac{l}{\vartheta\psi(t)\sqrt{h^{'}(\psi(t))}}=\frac{l}
{\vartheta\big(\psi(t)\big)^{(\beta+1)/2}l_0\big(\psi(t)\big)^{1/2}\big(\beta+\epsilon\big(\psi(t)\big)\big)^{1/2}}\longrightarrow 0.
\end{align*}
It follows
\begin{align*}
\sup_{|v-\psi(t)|\le\sigma l}|q(v)|\le\frac{1}{t\sqrt{\psi(t)}}.
\end{align*}
Using the above inequality and by the second line of $(\ref{3hfe1})$, when $y\in[-l,l]$, it holds as $t\rightarrow\infty$
\begin{align*}
&\left|\frac{q(\sigma y+\hat{x})}{h^{\prime\prime}(\hat{x})\sigma^{3}}\right|  \sim \left|q(\sigma
y+\hat{x}) \left(\frac{\beta-1}{\sqrt{\beta}}\frac{\psi(t)^{\beta-1/2}}{
t^{3/2}}l_{0}(\psi(t))\right)^{-1}\right|\\
&\le 2\left|\frac{\sqrt{\beta}}{\beta-1}\right|\sup_{|v-\psi(t)|\le\sigma l}|q(v)| \frac{t^{3/2}}{\psi(t)^{\beta-1/2}l_0(\psi(t))}\le 2\left|\frac{\sqrt{\beta}}{\beta-1}\right|\frac{\sqrt{t}}{\psi(t)^{\beta}l_0(\psi(t))}
\longrightarrow 0,
\end{align*}
where last step holds since $\psi(t)\sim t^{1/\beta}l_1(t)$ for some slowly varying function $l_1$.

\textbf{Case 2:} $h\in R_\infty.$ For any slowly varying function $l(t)$ as $t\rightarrow\infty$
\begin{align*}
\frac{\sigma l}{\vartheta\psi(t)}=\frac{\sqrt{\psi^{^{\prime}}(t)}
l}{\vartheta\psi(t)}=\sqrt{\frac{\epsilon(t)}{t\psi(t)}}\frac{l}{\vartheta
}\longrightarrow0,
\end{align*}
hence
\begin{align*}
\sup_{|v-\psi(t)|\le\sigma l}|q(v)|\le\frac{1}{t\sqrt{\psi(t)}}.
\end{align*}
Using this inequality and $(\ref{mu3 006})$, when $y\in[-l,l]$, it holds as
$t\rightarrow\infty$
\begin{align*}
\left|\frac{q(\sigma y+\hat{x})}{h^{\prime\prime}(\hat{x})\sigma^{3}}\right| &\le 2|q(\sigma
y+\hat{x})| \sqrt{\psi(t)\epsilon(t)t}\le 2\sup_{|v-\psi(t)|\le\sigma l}|q(v)|
\sqrt{\psi(t)\epsilon(t)t}\le\sqrt{\epsilon(t)/t}\rightarrow0.
\end{align*}
$(\ref{fqdq})$, together with $(\ref{3lemma0301})$, completes the proof.
\end{proof}

\begin{lem}
\label{3lemma1} For $p(x)$ belonging to $(\ref{densityFunction})$,
$h(x)\in\mathfrak{R}$, $\alpha\in\mathbb{N}$, denote by
\begin{align*}
\Psi(t,\alpha):=\int_{0}^{\infty}(x-\hat{x})^{\alpha}e^{tx}p(x)dx,
\end{align*}
then there exists some slowly varying function $l(t)$ such that it holds as
$t\rightarrow\infty$
\begin{align*}
\Psi(t,\alpha) & =c\sigma^{\alpha+1} e^{K(\hat{x},t)}T_{1}(t,\alpha
)\big(1+o(1)\big),
\end{align*}
where
\begin{align*}
& T_{1}(t,\alpha)=\int_{-\frac{l^{1/3}}{\sqrt{2}}}^{\frac{l^{1/3}}{\sqrt{2}}}
y^{\alpha}\exp\big(-\frac{y^{2}}{2}\big)dy-\frac{h^{^{\prime\prime}}(\hat
{x})\sigma^{3}}{6}\int_{-\frac{l^{1/3}}{\sqrt{2}}}^{\frac{l^{1/3}}{\sqrt{2}}}
y^{3+\alpha}\exp\big(-\frac{y^{2}}{2}\big)dy.
\end{align*}

\end{lem}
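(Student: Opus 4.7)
The plan is Laplace's method applied to
\begin{equation*}
\Psi(t,\alpha)=c\int_{0}^{\infty}(x-\hat{x})^{\alpha}\exp\bigl(K(x,t)+q(x)\bigr)dx,
\end{equation*}
where $K(\cdot,t)$ attains its maximum at $\hat{x}=\psi(t)$. First I would pull the dominant factor $e^{K(\hat{x},t)}$ outside and perform the substitution $x=\hat{x}+\sigma y$ (with $\sigma^{2}=1/h'(\hat{x})$). Combined with the Taylor expansion (\ref{3abeltheorem001}), which makes the quadratic term $-y^{2}/2$, this yields
\begin{equation*}
\Psi(t,\alpha)=c\sigma^{\alpha+1}e^{K(\hat{x},t)}\int_{-\hat{x}/\sigma}^{\infty} y^{\alpha}\exp\!\Bigl(-\tfrac{y^{2}}{2}-\tfrac{h''(\hat{x})\sigma^{3}}{6}y^{3}+\xi(\hat{x}+\sigma y,t)\Bigr)dy,
\end{equation*}
with $\xi(x,t):=\varepsilon(x,t)+q(x)$. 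All subsequent work is on the $y$-integral.

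Next I would split this integral at $|y|=l^{1/3}/\sqrt{2}$, which is itself slowly varying. On the central region, Lemma~\ref{3lemma01} applied with the slowly varying function $l^{1/3}$ gives $h''(\hat{x})\sigma^{3}y^{3}=o(1)$ uniformly, and Lemma~\ref{3lemma02} gives $\xi(\hat{x}+\sigma y,t)=o\bigl(h''(\hat{x})\sigma^{3}\bigr)$ uniformly. Expanding $e^{u}=1+u+O(u^{2})$ and multiplying by $y^{\alpha}e^{-y^{2}/2}$ one obtains, after integration, exactly $T_{1}(t,\alpha)$ with an additive remainder of order $o\bigl(h''(\hat{x})\sigma^{3}\bigr)$. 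For even $\alpha$ the Gaussian integral in $T_{1}$ is bounded below away from $0$, and for odd $\alpha$ parity forces the Gaussian part to vanish on the symmetric interval while the cubic integral is of exact order $h''(\hat{x})\sigma^{3}$; in either case the remainder is $o(1)$ relative to $T_{1}$.

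The main obstacle is the tail $|y|>l^{1/3}/\sqrt{2}$, where the expansion (\ref{3abeltheorem001}) is no longer reliable. Here I would exploit the concavity of $K(\cdot,t)$, valid since $h\in\mathfrak{R}$ is nondecreasing, to deduce $K(x,t)-K(\hat{x},t)\le -c_{1}h'(\hat{x})(x-\hat{x})^{2}$ on a moderate neighbourhood of $\hat{x}$, and then the coercivity (\ref{3section101}) together with the uniform control (\ref{densityFunction01}) of $q$ to obtain $K(x,t)+q(x)\le -c_{2}g(x)$ on the far tail. In the variable $y$ these combine to dominate the tail integrand by $|y|^{\alpha}e^{-y^{2}/8}+e^{-c_{3}g(\sigma|y|+\hat{x})}$, whose integral is exponentially smaller than $T_{1}$. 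Lemma~\ref{3lemma00}, i.e.\ $|\log\sigma|=o\bigl(K(\hat{x},t)\bigr)$, is what ultimately guarantees that multiplying these tail estimates by the prefactor $\sigma^{\alpha+1}e^{K(\hat{x},t)}$ still produces only a $o(1)$ relative error. Combining the central and tail contributions yields the announced expansion.
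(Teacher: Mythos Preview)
Your central-region analysis matches the paper's Step~2 essentially verbatim: the substitution $x=\hat{x}+\sigma y$, the Taylor expansion of $K$, the first-order expansion of the exponential, and the use of Lemmas~\ref{3lemma01}--\ref{3lemma02} to identify $T_{1}(t,\alpha)$ with a remainder $o(h''(\hat{x})\sigma^{3})=o(T_{1})$. The parity argument for even/odd $\alpha$ is exactly what the paper does.

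Your tail treatment, however, differs from the paper's and has a gap. Concavity of $K(\cdot,t)$ does \emph{not} by itself give a quadratic bound $K(x,t)-K(\hat{x},t)\le -c_{1}h'(\hat{x})(x-\hat{x})^{2}$; what concavity yields is that secant slopes are monotone, hence a \emph{linear} bound. This is precisely what the paper exploits in its Step~3: from concavity one has, for $x\notin\hat{x}+I_{t}$,
\[
K(x,t)-K(\hat{x},t)\le \frac{K(\hat{x}\pm l^{1/3}\sigma/\sqrt{2},t)-K(\hat{x},t)}{\pm\, l^{1/3}\sigma/\sqrt{2}}\,(x-\hat{x})\le -\frac{\sqrt{2}}{8}\,l^{1/3}\sigma^{-1}|x-\hat{x}|,
\]
the last step coming from the Taylor expansion evaluated at the boundary $\pm l^{1/3}\sigma/\sqrt{2}$. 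This single linear bound handles the entire region $\{x>\tau,\ x\notin\hat{x}+I_{t}\}$ uniformly and reduces the tail to an elementary integral $\int|y|^{\alpha}e^{-c\,l^{1/3}|y|}dy$, avoiding your two-zone (quadratic plus coercivity) split. To make your version work you would need the regular variation of $h'$ (not concavity) to control $h'(\xi)/h'(\hat{x})$ on the moderate zone, and a quantitative threshold for the far tail where $tx\le (1-c_{2})g(x)$; both are doable but neither is what you wrote. Note also that the paper introduces a third region $\{0<x<\tau\}$ (its Step~1) and it is \emph{there}, not in the outer tail, that Lemma~\ref{3lemma00} is invoked: it shows $\log\sigma$, $\log t$, $\log\hat{x}$, $\log h''(\hat{x})$ are all $o(K(\hat{x},t))$, so the crude bound $|\Psi_{1}|\le 2c\,t^{-1}\hat{x}^{\alpha}e^{t\tau}$ is $o(\sigma^{\alpha+1}e^{K(\hat{x},t)}h''(\hat{x})\sigma^{3})$.
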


\begin{proof}[Proof] By $(\ref{3abel01})$ and Lemma $\ref{3lemma0}$, for any slowly varying function $l(t)$ it
holds as $t\rightarrow\infty$
\begin{align*}
\sup_{|x-\hat{x}|\le\sigma l} |\varepsilon(x,t)|\rightarrow0.
\end{align*}
Given a slowly varying function $l$ with $l(t)\rightarrow\infty$ and define
the interval $I_{t}$ as follows
\begin{align*}
I_{t}:=\Big(-\frac{l^{1/3}\sigma}{\sqrt{2}},\frac{l^{1/3}\sigma}{\sqrt{2}%
}\Big).
\end{align*}
For large enough $\tau$, when $t\rightarrow\infty$ we can partition
$\mathbb{R}_{+}$ as
\begin{align*}
\mathbb{R}_{+}=\{x:0<x< \tau\}\cup\{x:x \in\hat{x}+I_{t}\}\cup\{x:x\ge\tau,x
\notin\hat{x}+I_{t}\},
\end{align*}
where $\tau$ large enough such that it holds for $x>\tau$
\begin{align}
\label{3abeltheorem0010}p(x)< 2ce^{-g(x)}.
\end{align}

Obviously, for fixed $\tau$, $\{x:0<x< \tau\}\cap\{x:x \in\hat{x}+I_{t}\}={Ø}
$ since for large $t$ we have $\min\big(x:x\in\hat{x}+I_{t}\big)\rightarrow
\infty$ as $t\rightarrow\infty$. Hence it holds
\begin{align}
\label{3section1015}\Psi(t,\alpha) & =\int_{0}^{\tau}(x-\hat{x})^{\alpha
}e^{tx}p(x)dx+\int_{x \in\hat{x}+I_{t}}(x-\hat{x})^{\alpha}e^{tx}%
p(x)dx+\int_{x \notin\hat{x}+I_{t},x>\tau}(x-\hat{x})^{\alpha}e^{tx}%
p(x)dx\nonumber\\
& :=\Psi_{1}(t,\alpha)+\Psi_{2}(t,\alpha)+\Psi_{3}(t,\alpha).
\end{align}
We estimate sequentially $\Psi_{1}(t,\alpha),\Psi_{2}(t,\alpha),\Psi
_{3}(t,\alpha)$ in \textbf{Step 1, Step 2 and Step 3}.

\textbf{Step 1:} Using $(\ref{3abeltheorem0010})$, for $\tau$ large enough, we
have
\begin{align}
\label{3section1002}|\Psi_{1}(t,\alpha)|  & \le\int_{0}^{\tau}|x-\hat
{x}|^{\alpha}e^{tx}p(x)dx\le2c \int_{0}^{\tau}|x-\hat{x}|^{\alpha}%
e^{tx-g(x)}dx \le{2c}t^{-1}\hat{x}^{\alpha
}e^{t\tau}.
\end{align}
We show it holds for $h\in\mathfrak{R}$ as $t\rightarrow\infty$
\begin{align}
\label{3section10020}t^{-1}\hat{x}^{\alpha}e^{t\tau}=o(\sigma^{\alpha
+1}e^{K(\hat{x},t)}h^{^{\prime\prime}}(\hat{x})\sigma^{3}).
\end{align}
$(\ref{3section10020})$ is equivalent to
\begin{align*}
\sigma^{-\alpha-4}t^{-1}\hat{x}^{\alpha}e^{t\tau}\big(h^{^{\prime\prime}}%
(\hat{x})\big)^{-1}=o(e^{K(\hat{x},t)}),
\end{align*}
which is implied by
\begin{align*}
\exp\big(-({\alpha}+4)\log\sigma-\log t +\alpha\log\hat{x}+\tau t-\log
h^{^{\prime\prime}}(\hat{x})\big)=o(e^{K(\hat{x},t)}).
\end{align*}
Since $\hat{x}=\psi(t)$, it holds
\begin{align}
\label{3section10002}K(\hat{x},t)=t\psi(t)-g(\psi(t))=\int_{1}^{t}
\psi(u)du+\psi(1)-g(1),
\end{align}
where the second equality can be easily verified by the change of variable
$u=h(v)$.
By Lemma $(\ref{3lemma00})$, we know $\log\sigma=o(e^{K(\hat{x},t)})$ as
$t\rightarrow\infty$. So it remains to show $t=o(e^{K(\hat{x},t)})$, $\log
\hat{x}=o(e^{K(\hat{x},t)})$ and $\log h^{^{\prime\prime}}(\hat{x}%
)=o(e^{K(\hat{x},t)})$.

If $h(x)\in R_{\beta}$, by Theorem $(1.5.12)$ of $\cite{Bingham}$, it holds
$\psi(x)\sim x^{1/\beta}l_{1}(x) $ with some slowly varying function
$l_{1}(x)$. $(\ref{3section202})$ and $(\ref{3section10002})$ yield
$t=o(e^{K(\hat{x},t)}) $. In addition, $\log\hat{x}=\log\psi(t)\sim\big((1/\beta)
\log t +\log l_1(t)\big)=o(e^{K(\hat{x},t)})$. By $(\ref{3lem230})$, it holds $\log
h^{^{\prime\prime}}(\hat{x})=o(t)$. Thus $(\ref{3section10020})$ holds.

If $h(x)\in R_{\infty}$, $\psi(x)\in\widetilde{R_{0}}$ is slowly varying as
$x\rightarrow\infty$. Therefore, by $(\ref{3section203})$ and
$(\ref{3section10002})$, it holds $t=o(e^{K(\hat{x},t)})$ and $\log\hat
{x}=\log\psi(t)=o(e^{K(\hat{x},t)})$. Using $(\ref{mu3 006})$, we have $\log
h^{^{\prime\prime}}(\hat{x})\sim\log t-2\log\hat{x}-2\log\epsilon(t)$. Under
condition $(\ref{3section1030})$, $\log\epsilon(t)=o(t)$, thus it holds $\log
h^{^{\prime\prime}}(\hat{x})=o(t)$. We get $(\ref{3section10020})$.

$(\ref{3section1002})$ and $(\ref{3section10020})$ yield together
\begin{align}
\label{3section1004}|\Psi_{1}(t,\alpha)|=o(\sigma^{\alpha+1}e^{K(\hat{x}%
,t)}h^{^{\prime\prime}}(\hat{x})\sigma^{3}).
\end{align}

\textbf{Step 2:} Notice $\min\big(x:x\in\hat{x}+I_{t}\big)\rightarrow\infty$
as $t\rightarrow\infty$, which implies both $\varepsilon(x,t)$ and $q(x)$ go to
$0$ when $x\in\hat{x}+I_{t}$. By $(\ref{densityFunction})$ and
$(\ref{3abeltheorem001})$, as $t\rightarrow\infty$
\begin{align*}
\Psi_{2}(t,\alpha) & =\int_{x \in\hat{x}+I_{t}}(x-\hat{x})^{\alpha}%
c\exp\big(K(x,t)+q(x)\big)dx\\
& =\int_{x \in\hat{x}+I_{t}}(x-\hat{x})^{\alpha}c\exp\Big(K(\hat{x}%
,t)-\frac{1}{2}h^{\prime}(\hat{x})\big(x-\hat{x}\big)^{2} -\frac{1}{6}h^{\prime\prime}(\hat{x})\big(x-\hat{x}\big)^{3}%
+\xi(x,t)\Big)dx,
\end{align*}
where $\xi(x,t)=\varepsilon(x,t)+q(x)$. Make the change of variable $y=(x-\hat
{x})/\sigma$, it holds
\begin{align}
\label{3abeltheorem002}\Psi_{2}(t,\alpha)=c\sigma^{\alpha+1} \exp
\big(K(\hat{x},t)\big)\int_{-\frac{l^{1/3}}{\sqrt{2}}}^{\frac{l^{1/3}}%
{\sqrt{2}}} y^{\alpha}\exp\Big(-\frac{y^{2}}{2}-\frac{h^{^{\prime\prime}}%
(\hat{x})\sigma^{3}}{6}y^{3}+\xi(\sigma y+\hat{x},t)\Big)dy.
\end{align}
On $y\in\big(-{l^{1/3}}/\sqrt{2},{l^{1/3}}/\sqrt{2}\big)$, by
$(\ref{3section2010})$, $|h^{^{\prime\prime}}(\hat{x})\sigma^{3}y^{3}%
|\le|h^{^{\prime\prime}}(\hat{x})\sigma^{3} l|\rightarrow0$ as $t\rightarrow
\infty$. Perform the first order Taylor expansion, as $t\rightarrow
\infty$
\begin{align*}
\exp\Big(-\frac{h^{^{\prime\prime}}(\hat{x})\sigma^{3}}{6}y^{3}+\xi(\sigma
y+\hat{x},t)\Big)=1-\frac{h^{^{\prime\prime}}(\hat{x})\sigma^{3}}{6}y^{3}%
+\xi(\sigma y+\hat{x},t)+o_{1}(t,y),
\end{align*}
where
\begin{align}\label{feeqfe}
o_{1}(t,y)=o\Big(-\frac{h^{^{\prime\prime}}(\hat{x})\sigma^{3}}{6}y^{3}%
+\xi(\sigma y+\hat{x},t)\Big).
\end{align}
Hence we obtain
\begin{align*}
&  \int_{-\frac{l^{1/3}}{\sqrt{2}}}^{\frac{l^{1/3}}{\sqrt{2}}} y^{\alpha}%
\exp\Big(-\frac{y^{2}}{2}-\frac{h^{^{\prime\prime}}(\hat{x})\sigma^{3}}%
{6}y^{3}+\xi(\sigma y+\hat{x},t)\Big)dy\\
& =\int_{-\frac{l^{1/3}}{\sqrt{2}}}^{\frac{l^{1/3}}{\sqrt{2}}} \Big(1-\frac
{h^{^{\prime\prime}}(\hat{x})\sigma^{3}}{6}y^{3}+\xi(\sigma y+\hat{x}%
,t)+o_{1}(t,y)\Big)y^{\alpha}\exp\big(-\frac{y^{2}}{2}\big)dy =T_{1}(t,\alpha)+T_{2}(t,\alpha),
\end{align*}
where $T_{1}(t,\alpha)$ and $T_{2}(t,\alpha)$ are defined as follows
\begin{align}
\label{3abeltheorem003} & T_{1}(t,\alpha):=\int_{-\frac{l^{1/3}}{\sqrt{2}}%
}^{\frac{l^{1/3}}{\sqrt{2}}} y^{\alpha}\exp\big(-\frac{y^{2}}{2}%
\big)dy-\frac{h^{^{\prime\prime}}(\hat{x})\sigma^{3}}{6}\int_{-\frac{l^{1/3}%
}{\sqrt{2}}}^{\frac{l^{1/3}}{\sqrt{2}}} y^{3+\alpha}\exp\big(-\frac{y^{2}}%
{2}\big)dy,\nonumber\\
& T_{2}(t,\alpha):=\int_{-\frac{l^{1/3}}{\sqrt{2}}}^{\frac{l^{1/3}}{\sqrt{2}}}
\Big(\xi(\sigma y+\hat{x},t)+o_{1}(t,y)\Big)y^{\alpha}\exp\big(-\frac{y^{2}%
}{2}\big)dy.
\end{align}
For $T_{2}(t,\alpha)$, using $(\ref{feeqfe})$ we have
\begin{align*}
|T_{2}(t,\alpha)|
& \le\sup_{y\in[-l,l]} |\xi(\sigma y+\hat{x},t)|\int_{-\frac{l^{1/3}}{\sqrt
{2}}}^{\frac{l^{1/3}}{\sqrt{2}}} |y|^{\alpha}\exp\big(-\frac{y^{2}}%
{2}\big)dy\\
& \qquad\qquad+\int_{-\frac{l^{1/3}}{\sqrt{2}}}^{\frac{l^{1/3}}{\sqrt{2}}}
\Big(\big|o\big(\frac{h^{^{\prime\prime}}(\hat{x})\sigma^{3}}{6}%
y^{3}\big)\big|+\big|o\big(\xi(\sigma y+\hat{x},t)\big)\big|\Big)|y|^{\alpha
}\exp\big(-\frac{y^{2}}{2}\big)dy\\
& \le2\sup_{y\in[-l,l]} |\xi(\sigma y+\hat{x},t)|\int_{-\frac{l^{1/3}}%
{\sqrt{2}}}^{\frac{l^{1/3}}{\sqrt{2}}} |y|^{\alpha}\exp\big(-\frac{y^{2}}%
{2}\big)dy+|o(h^{^{\prime\prime}}(\hat{x})\sigma^{3})|\int_{-\frac{l^{1/3}%
}{\sqrt{2}}}^{\frac{l^{1/3}}{\sqrt{2}}} |y|^{3+\alpha} \exp\big(-\frac{y^{2}%
}{2}\big)dy\\
& = |o(h^{^{\prime\prime}}(\hat{x})\sigma^{3})|\Big(\int_{-\frac{l^{1/3}%
}{\sqrt{2}}}^{\frac{l^{1/3}}{\sqrt{2}}} |y|^{\alpha}\exp\big(-\frac{y^{2}}%
{2}\big)dy+\int_{-\frac{l^{1/3}}{\sqrt{2}}}^{\frac{l^{1/3}}{\sqrt{2}}}
|y|^{3+\alpha} \exp\big(-\frac{y^{2}}{2}\big)dy\Big),
\end{align*}
where last equality holds from Lemma $\ref{3lemma02}$. Since the integrals in
the last equality are both bounded, it holds as $t\rightarrow\infty$
\begin{align*}
T_{2}(t,\alpha)=o(h^{^{\prime\prime}}(\hat{x})\sigma^{3}).
\end{align*}

When $\alpha$ is even, the second term of $T_{1}(t,\alpha)$ vanishes. When
$\alpha$ is odd, the first term of $T_{1}(t,\alpha)$ vanishes. $h^{^{\prime\prime}}(\hat
{x})\sigma^{3}\rightarrow 0$ by $(\ref{3section2010})$, thus
$T_{1}(t,\alpha)$ is at least the same order than $h^{^{\prime\prime}}(\hat
{x})\sigma^{3}$. It follows as $t\rightarrow\infty$
\begin{align}
\label{3abeltheorem004}T_{2}(t,\alpha)=o(T_{1}(t,\alpha)).
\end{align}
Using $(\ref{3abeltheorem002})$, $( \ref{3abeltheorem003})$ and
$(\ref{3abeltheorem004})$ we get
\begin{align}
\label{3section1014}\Psi_{2}(t,\alpha) & =c\sigma^{\alpha+1} \exp
\big(K(\hat{x},t)\big)T_{1}(t,\alpha)\big(1+o(1)\big).
\end{align}

\textbf{Step 3:} Given $h\in\mathfrak{R}$, for any $t$, $K(x,t)$ as a function
of $x$ ($x>\tau$) is concave since
\begin{align*}
K^{\prime\prime}(x,t)=-h^{\prime}(x)<0.
\end{align*}
Thus we get for $x\notin\hat{x}+I_{t}$ and $x>\tau$
\begin{align}
\label{3abeltheorem005}K(x,t)-K(\hat{x},t)\le\frac{K(\hat{x}+\frac
{l^{1/3}\sigma}{\sqrt{2}}sgn(x-\hat{x}),t)-K(\hat{x},t)}{\frac{l^{1/3}\sigma
}{\sqrt{2}}sgn(x-\hat{x})}(x-\hat{x}),
\end{align}
where
\begin{align*}
sgn(x-\hat{x})=
\begin{cases}
1 \qquad\;\;\; if \quad x\ge\hat{x},\\
-1 \qquad if \quad x<\hat{x}.
\end{cases}
\end{align*}
Using $(\ref{3abeltheorem001})$, we get
\begin{align*}
{K(\hat{x}+\frac{l^{1/3}\sigma}{\sqrt{2}}sgn(x-\hat{x}),t)-K(\hat{x},t)}
\le-\frac{1}{8}h^{\prime}(\hat{x})l^{2/3}\sigma^{2}=-\frac{1}{8}l^{2/3},
\end{align*}
which, combined with $(\ref{3abeltheorem005})$, yields
\begin{align*}
K(x,t)-K(\hat{x},t)\le-\frac{\sqrt{2}}{8}l^{1/3}\sigma^{-1}|x-\hat{x}|.
\end{align*}
\newline We obtain
\begin{align*}
|\Psi_{3}(t,\alpha)| & \le 2c\int_{x \notin\hat{x}+I_{t},x>\tau}|x-\hat
{x}|^{\alpha}\exp\big(K(x,t)\big)dx\\
& \le 2 ce^{K(\hat{x},t)}\int_{|x- \hat{x}|>\frac{l^{1/3}\sigma}{\sqrt{2}}}|x-\hat{x}|^{\alpha}\exp\big(-\frac{\sqrt{2}}{8}l^{1/3}\sigma^{-1}|x-\hat{x}|\big)dx\\
& =2c e^{K(\hat{x},t)}\sigma^{\alpha+1}\int_{|y|>\frac{l^{1/3}}{\sqrt{2}}}|y|^{\alpha}\exp\big(-\frac{\sqrt{2}}{8}l^{1/3}|y|\big)dy\\
& =2c e^{K(\hat{x},t)}\sigma^{\alpha+1} \Big
(2e^{-l^{2/3}/8}\big(1+o(1)\big)\Big),
\end{align*}
where last equality holds when $l\rightarrow\infty$ (see e.g. Theorem 4.12.10
of \cite{Bingham}). With $(\ref{3section1014})$, we obtain
\begin{align*}
\Big|\frac{\Psi_{3}(t,\alpha)}{\Psi_{2}(t,\alpha)}\Big|\le\frac{8e^{-l^{2/3}%
/8}}{|T_{1}(t,\alpha)|}.
\end{align*}
In \textbf{Step 2}, we know $T_{1}(t,\alpha)$ has at least the order
$h^{^{\prime\prime}}(\hat{x})\sigma^{3}$. Hence there exists some positive
constant $Q$ and some slowly varying function $l_2$ with $l_{2}(t)\rightarrow\infty$ such that it holds as
$t\rightarrow\infty$
\begin{align*}
\Big|\frac{\Psi_{3}(t,\alpha)}{\Psi_{2}(t,\alpha)}\Big| & \le\frac
{Qe^{-l_{2}^{2/3}/8}}{h^{^{\prime\prime}}(\hat{x})\sigma^{3}}.
\end{align*}
For example, we can take $l_{2}(t)=(\log t)^{3}$.

If $h\in R_{\beta}$, one close look to $(\ref{3hfe1})$, it is easy to know $h^{^{\prime
\prime}}(\hat{x})\sigma^{3} \ge{1}/{t^{1+1/(2\beta)}}$, with the choice of $l_2$ as above, we have
\begin{align*}
\Big|\frac{\Psi_{3}(t,\alpha)}{\Psi_{2}(t,\alpha)}\Big| & \le Q\exp
\big(-l_{2}^{2/3}/8+(1+1/(2\beta))\log t\big)\longrightarrow0.
\end{align*}

If $h\in R_{\infty}$, using $(\ref{mu3 006})$, then it holds as $t\rightarrow
\infty$
\begin{align}
\label{3section1016}\Big|\frac{\Psi_{3}(t,\alpha)}{\Psi_{2}(t,\alpha)}\Big| &
\le2Q\exp\big(-l_{2}^{2/3}/8+\log\sqrt{t\psi(t)\epsilon(t)}\big)\nonumber\\
& =2Q\exp\Big(-l_{2}^{2/3}/8+({1}/{2})\big(\log t+\log\psi(t)+\log
\epsilon(t)\big)\Big) \longrightarrow0,
\end{align}
where last step holds since $\log\psi(t)=O(\log t)$. The proof is completed by
combining $(\ref{3section1015})$, $(\ref{3section1004})$, $(\ref{3section1014}%
)$ and $(\ref{3section1016})$.
\end{proof}

\begin{proof}[Proof of Theorem \ref{order of s}]By Lemma $\ref{3lemma1}%
$, if $\alpha=0$, it holds $T_{1}(t,0)\sim\sqrt{2\pi}$ as $t\rightarrow\infty
$, hence for $p(x)$ defined in $(\ref{densityFunction})$, we can approximate
$X$'s moment generating function $\Phi(t)$
\begin{align}
\label{3moment001}\Phi(t)=\int_{0}^{\infty}e^{tx}p(x)dx=c\sqrt{2\pi}\sigma
e^{K(\hat{x},t)}\big(1+o(1)\big).
\end{align}

If $\alpha=1$, it holds as $t\rightarrow\infty$,
\begin{align*}
& T_{1}(t,1)=-\frac{h^{^{\prime\prime}}(\hat{x})\sigma^{3}}{6}\int
_{-\frac{l^{1/3}}{\sqrt{2}}}^{\frac{l^{1/3}}{\sqrt{2}}} y^{4}\exp
\big(-\frac{y^{2}}{2}\big)dy=-\frac{\sqrt{2\pi}h^{^{\prime\prime}}(\hat
{x})\sigma^{3}}{2}\big(1+o(1)\big),
\end{align*}
hence we have with $\Psi(t,\alpha)$ defined in Lemma $\ref{3lemma1}$
\begin{align}
\label{3moment020}\Psi(t,1) & =-c\sqrt{2\pi}\sigma^{2} e^{K(\hat{x},t)}%
\frac{h^{^{\prime\prime}}(\hat{x})\sigma^{3}}{2}\big(1+o(1)\big)=-\Phi
(t)\frac{h^{^{\prime\prime}}(\hat{x})\sigma^{4}}{2}\big(1+o(1)\big),
\end{align}
which, together with the definition of $\Psi(t,\alpha)$, yields
\begin{align}
\label{3moment02}\int_{0}^{\infty}xe^{tx}p(x)dx=\Psi(t,1)+\hat{x}%
\Phi(t)=\Big(\hat{x}-\frac{h^{^{\prime\prime}}(\hat{x})\sigma^{4}}%
{2}\big(1+o(1)\big)\Big)\Phi(t).
\end{align}
Hence we get
\begin{align}
\label{3moment021}m(t)&=\frac{d\log\Phi(t)}{dt} =\frac{\int_{0}^{\infty
}xe^{tx}p(x)dx}{\Phi(t)}=\hat{x}-\frac{h^{^{\prime\prime}}(\hat{x})\sigma^{4}%
}{2}\big(1+o(1)\big).
\end{align}
By $(\ref{3section2010})$, as $t\rightarrow\infty$
\begin{align}
\label{3moment0g21}m(t)\sim \hat{x}=\psi(t).
\end{align}

Set $\alpha=2$, as $t\rightarrow\infty$, it follows
\begin{align}
\label{3moment03}\Psi(t,2) & =c\sigma^{3} e^{K(\hat{x},t)}\int_{-\frac
{l^{1/3}}{\sqrt{2}}}^{\frac{l^{1/3}}{\sqrt{2}}} y^{2} \exp\big(-\frac{y^{2}%
}{2}\big)dy\big(1+o(1)\big) =\sigma^{2}%
\Phi(t)\big(1+o(1)\big).
\end{align}
Using $(\ref{3moment020}),(\ref{3moment021})$ and $(\ref{3moment03})$, we
have
\begin{align*}
& \int_{0}^{\infty}\big(x-m(t)\big)^{2} e^{tx}p(x)dx =\int_{0}^{\infty
}\big(x-\hat{x}+\hat{x}-m(t)\big)^{2} e^{tx}p(x)dx\\
& =\int_{0}^{\infty}\big(x-\hat{x}\big)^{2} e^{tx}p(x)dx +2\big(\hat
{x}-m(t)\big)\int_{0}^{\infty}(x-\hat{x}) e^{tx}p(x)dx+\big(\hat
{x}-m(t)\big)^{2}\Phi(t)\\
& =\Psi(t,2)+2\big(\hat{x}-m(t)\big)\Psi(t,1)+\big(\hat{x}-m(t)\big)^{2}%
\Phi(t)\\
& =\sigma^{2}\Phi(t)\big(1+o(1)\big)-h^{^{\prime\prime}}(\hat{x})\sigma
^{4}\Big(\Phi(t)\frac{h^{^{\prime\prime}}(\hat{x})\sigma^{4}}{2}%
\Big)\big(1+o(1)\big) +\Big(\frac{h^{^{\prime\prime}}(\hat{x})\sigma^{4}}%
{2}\Big)^{2}\Phi(t)\big(1+o(1)\big)\\
&=\sigma^{2}\Phi(t)\big(1+o(1)\big)-\frac{(h^{^{\prime\prime}}(\hat{x})\sigma^{3})^{2}}%
{4}\sigma^2\Phi(t)\big(1+o(1)\big)=\sigma^{2}\Phi(t)\big(1+o(1)\big),
\end{align*}
where last equality holds since $h^{^{\prime\prime}}(\hat{x})\sigma^{3}$ goes to $0$ by $(\ref{3section2010})$, thus as $t\rightarrow\infty$
\begin{align}
\label{3moment02012}s^{2}(t)&=\frac{d^{2}\log\Phi(t)}{dt^{2}} =\frac{\int
_{0}^{\infty}\big(x-m(t)\big)^{2}e^{tx}p(x)dx}{\Phi(t)}
\sim \sigma^{2}=\psi^\prime(t).
\end{align}

Set $\alpha=3$, the first term of $T_{1}(t,3)$ vanishes, we obtain as
$t\rightarrow\infty$
\begin{align}
\label{3moment0202}\Psi(t,3) & =-c\sqrt{2\pi}\sigma^{4} e^{K(\hat{x},t)}%
\frac{h^{^{\prime\prime}}(\hat{x})\sigma^{3}}{6}\int_{-\frac{l^{1/3}}{\sqrt
{2}}}^{\frac{l^{1/3}}{\sqrt{2}}} \frac{1}{\sqrt{2\pi}}y^{6}\exp\big(-\frac
{y^{2}}{2}\big)dy\nonumber\\
& =-cM_{6}\sqrt{2\pi}e^{K(\hat{x},t)}\frac{h^{^{\prime\prime}}(\hat{x}%
)\sigma^{7}}{6}\big(1+o(1)\big)=-M_{6}\frac{h^{^{\prime\prime}}(\hat{x}%
)\sigma^{6}}{6}\Phi(t)\big(1+o(1)\big),
\end{align}
where $M_{6}$ denotes the sixth order moment of standard normal distribution.
Using $(\ref{3moment020}),(\ref{3moment021})$, $(\ref{3moment03})$ and
$(\ref{3moment0202})$, we have as $t\rightarrow\infty$
\begin{align*}
& \int_{0}^{\infty}\big(x-m(t)\big)^{3} e^{tx}p(x)dx =\int_{0}^{\infty
}\big(x-\hat{x}+\hat{x}-m(t)\big)^{3} e^{tx}p(x)dx\\
& =\int_{0}^{\infty}\Big((x-\hat{x})^{3}+3(x-\hat{x})^{2}\big(\hat
{x}-m(t)\big)+3(x-\hat{x})\big(\hat{x}-m(t)\big)^{2} +\big(\hat{x}%
-m(t)\big)^{3}\Big)e^{tx}p(x)dx\\
& =\Psi(t,3)+3\big(\hat{x}-m(t)\big)\Psi(t,2)+3\big(\hat{x}-m(t)\big)^{2}%
\Psi(t,1)+\big(\hat{x}-m(t)\big)^{3}\Phi(t)\\
& =-M_{6}\frac{h^{^{\prime\prime}}(\hat{x})\sigma^{6}}{6}\Phi
(t)\big(1+o(1)\big)+(3/2)h^{^{\prime\prime}}(\hat{x})\sigma^{4}(\sigma^{2}%
\Phi(t))\big(1+o(1)\big)\\
& \qquad-3\Big(\frac{h^{^{\prime\prime}}(\hat{x})\sigma^{4}}{2}\Big)^{2}%
\Phi(t)\frac{h^{^{\prime\prime}}(\hat{x})\sigma^{4}}{2}%
\big(1+o(1)\big)+\Big(\frac{h^{^{\prime\prime}}(\hat{x})\sigma^{4}}%
{2}\Big)^{3}\Phi(t)\big(1+o(1)\big)\\
& =\frac{9-M_{6}}{6}h^{^{''}}(\hat{x})\sigma^{6}\Phi(t)\big(1+o(1)\big)
-h^{^{''}}(\hat{x})\sigma^6\Phi(t)\frac
{(h^{^{''}}(\hat{x})\sigma^{3})^{2}}{4}\big(1+o(1)\big)\\
&=\frac{9-M_{6}}{6}h^{^{''}}(\hat{x})\sigma^{6}\Phi(t)\big(1+o(1)\big),
\end{align*}
where last equality holds since $h^{^{''}}(\hat{x})\sigma^{3}\rightarrow 0$ by $(\ref{3section2010})$. Hence we get as $t\rightarrow\infty$
\begin{align}
\label{3moment02022}
\mu_{3}(t)&=\frac{d^{3}\log\Phi(t)}{dt^{3}} =\frac
{\int_{0}^{\infty}\big(x-m(t)\big)^{3}e^{tx}p(x)dx}{\Phi(t)}\nonumber\\
&\sim\frac
{9-M_{6}}{6}h^{^{''}}(\hat{x})\sigma^{6}
=-\frac{9-M_{6}}{6}\frac{\psi^{''}(t)}{\psi^{'}(t)^3}\psi^\prime(t)^3
=\frac{M_{6}-9}{6}\psi^{''}(t).
\end{align}

The proof is completed by combining $(\ref{3moment0g21})$
$(\ref{3moment02012})$ with $(\ref{3moment02022})$.

\end{proof}

\begin{proof}[Proof of Corollary \ref{3cor1}]
The proof is immediate by $(\ref{3section2010})$ of Lemma $\ref{3lemma01}$, from which we get $h^{^{\prime\prime}}(\hat{x})\sigma^{3}\rightarrow 0$ since $l(t)\rightarrow \infty$ as $t\rightarrow\infty.$
By $(\ref{3moment02012})$ and $(\ref{3moment02022})$, it holds as
$t\rightarrow\infty$
\begin{align}
\label{3appen03}\frac{\mu_{3}}{s^{3}}\sim\frac{9-M_{6}}{6}h^{^{\prime\prime}%
}(\hat{x})\sigma^{3}\longrightarrow 0.
\end{align}

\end{proof}

\subsection{Proof of Theorem \ref{3theorem1}} \label{3appendix02}

\begin{proof}[Proof]
\textbf{Step 1:} Denote by
\[
G(x):=\rho_{n}(x)-\phi(x)-\frac{\mu_{3}}{6\sqrt{n}s^{3}}\big(x^{3}%
-3x\big)\phi(x).
\]
Let $\varphi^{a_{n}}(\tau)$ be the characteristic function (c.f) of $\bar{\pi
}^{a_{n}};$ the c.f of $\rho_{n}$ is $\big(\varphi^{a_{n}}(\tau/\sqrt
{n})\big)^{n}$. Hence it holds by Fourier inversion theorem
\[
G(x)=\frac{1}{2\pi}\int_{-\infty}^{\infty}e^{-i\tau x}\Big(\big(\varphi
^{a_{n}}(\tau/\sqrt{n})\big)^{n}-e^{-\frac{1}{2}\tau^{2}}-\frac{\mu_{3}%
}{6\sqrt{n}s^{3}}(i\tau)^{3}e^{-\frac{1}{2}\tau^{2}}\Big)d\tau.
\]
We obtain
\begin{align}\label{fellelem1}
G(x)  \leq\frac{1}{2\pi}\int_{-\infty}^{\infty}\Big|\big(\varphi^{a_{n}}%
(\tau/\sqrt{n})\big)^{n}-e^{-\frac{1}{2}\tau^{2}}-\frac{\mu_{3}}{6\sqrt
{n}s^{3}}(i\tau)^{3}e^{-\frac{1}{2}\tau^{2}}\Big|d\tau.
\end{align}

\textbf{Step 2:} In this step we show that the characteristic function
$\varphi^{a_{n}}$ of $\bar{\pi}^{a_{n}}(x)$ satisfies
\begin{align}\label{fellerLEm2}
\sup_{a_{n}\in\mathbb{R}^{+}}\int|\varphi^{a_{n}}(\tau)|^{2}d\tau<\infty\qquad
and\quad\sup_{a_{n}\in\mathbb{R}^{+},|\tau|\geq\varrho>0}|\varphi^{a_{n}%
}(\tau)|<1,
\end{align}
for any positive $\varrho$ .

It is easy to verify that $r$-order ($r\geq1$) moment $\mu^{r}$ of $\pi
^{a_{n}}(x)$ satisfies
\[
\mu^{r}(t)=\frac{d^{r}\log\Phi(t)}{dt^{r}}\quad with\;t=m^{\leftarrow}(a_{n}),
\]
By Parseval identity
\begin{align}\label{3the13}
\int|\varphi^{a_{n}}(\tau)|^{2}d\tau=2\pi\int(\bar{\pi}^{a_{n}}(x))^{2}%
dx\leq2\pi\sup_{x\in\mathbb{R}}\bar{\pi}^{a_{n}}(x).
\end{align}
For the density function $p(x)$ in $(\ref{densityFunction})$, Theorem 5.4 of
Juszczak and Nagaev $\cite{Nagaev}$ states that the normalized conjugate density of $p(x)$,
namely, $\bar{\pi}^{a_{n}}(x)$ has the propriety
\[
\lim_{a_{n}\rightarrow\infty}\sup_{x\in\mathbb{R}}|\bar{\pi}^{a_{n}}%
(x)-\phi(x)|=0.
\]
Thus for arbitrary positive $\delta$, there exists some positive constant $M $
such that it holds
\[
\sup_{a_{n}\geq M}\sup_{x\in\mathbb{R}}|\bar{\pi}^{a_{n}}(x)-\phi
(x)|\leq\delta,
\]
which entails that $\sup_{a_{n}\geq M}\sup_{x\in\mathbb{R}}\bar{\pi}^{a_{n}%
}(x)<\infty$. When $a_{n}<M$, $\sup_{a_{n}<M}\sup_{x\in\mathbb{R}}\bar{\pi
}^{a_{n}}(x)<\infty;$ hence we have
\[
\sup_{a_{n}\in\mathbb{R}^{+}}\sup_{x\in\mathbb{R}}\bar{\pi}^{a_{n}}(x)<\infty,
\]
which, together with $(\ref{3the13})$, gives the first inequality of $(\ref{fellerLEm2})$. Furthermore,
$\varphi^{a_{n}}(\tau)$ is not periodic, hence the second inequality of
$(\ref{fellerLEm2})$ holds from Lemma $4$ (Chapter $15$, section $1$) of
$\cite{Feller71}$.

\textbf{Step 3: } We complete the proof by showing that for $n$ large enough
\begin{equation}
\int_{-\infty}^{\infty}\Big|\big(\varphi^{a_{n}}(\tau/\sqrt{n})\big)^{n}%
-e^{-\frac{1}{2}\tau^{2}}-\frac{\mu_{3}}{6\sqrt{n}s^{3}}(i\tau)^{3}%
e^{-\frac{1}{2}\tau^{2}}\Big|d\tau=o\Big(\frac{1}{\sqrt{n}}%
\Big).\label{3theo100}%
\end{equation}

For arbitrarily positive sequence $a_{n}$ we have
\[
\sup_{a_{n}\in\mathbb{R}^{+}}\Big|\varphi^{a_{n}}(\tau)\Big|=\sup_{a_{n}%
\in\mathbb{R}^{+}}\Big|\int_{-\infty}^{\infty}e^{i\tau x}\bar{\pi}^{a_{n}%
}(x)dx\Big|\leq\sup_{a_{n}\in\mathbb{R}^{+}}\int_{-\infty}^{\infty
}\Big|e^{i\tau x}\bar{\pi}^{a_{n}}(x)\Big|dx=1.
\]
In addition, $\pi^{a_{n}}(x)$ is integrable, by Riemann-Lebesgue theorem, it
holds when $|\tau|\rightarrow\infty$
\[
\sup_{a_{n}\in\mathbb{R}^{+}}\Big|\varphi^{a_{n}}(\tau)\Big|\longrightarrow0.
\]
Thus for any strictly positive $\omega$, there exists some corresponding
$N_{\omega}$ such that if $|\tau|>\omega$, it holds
\begin{align}\label{3the20}
\sup_{a_{n}\in\mathbb{R}^{+}}\Big|\varphi^{a_{n}}(\tau)\Big|<N_{\omega}<1.
\end{align}
We now turn to (\ref{3theo100}) which is splitted on $|\tau|>\omega\sqrt{n} $
and on $|\tau|\leq\omega\sqrt{n}$ .

It holds
\begin{align}
&  \sqrt{n}\int_{|\tau|>\omega\sqrt{n}}\Big|\big(\varphi^{a_{n}}(\tau/\sqrt
{n})\big)^{n}-e^{-\frac{1}{2}\tau^{2}}-\frac{\mu_{3}}{6\sqrt{n}s^{3}}%
(i\tau)^{3}e^{-\frac{1}{2}\tau^{2}}\Big|d\tau\nonumber\label{3the21}\\
&  \leq\sqrt{n}N_{\omega}^{n-2}\int_{|\tau|>\omega\sqrt{n}}\Big|\big(\varphi
^{a_{n}}(\tau/\sqrt{n})\big)\Big|^{2}d\tau+\sqrt{n}\int_{|\tau|>\omega\sqrt
{n}}e^{-\frac{1}{2}\tau^{2}}\Big(1+\Big|\frac{\mu_{3}\tau^{3}}{6\sqrt{n}s^{3}%
}\Big|\Big)d\tau.
\end{align}
where the first term of the last line tends to $0$ for $n$ large enough,
since
\begin{align}
&  \sqrt{n}N_{\omega}^{n-2}\int_{|\tau|>\omega\sqrt{n}}\Big|\big(\varphi
^{a_{n}}(\tau/\sqrt{n})\big)\Big|^{2}d\tau\nonumber\label{3the22}\\
&  =\exp\Big(\frac{1}{2}\log n+(n-2)\log N_{\omega}+\log\int_{|\tau
|>\omega\sqrt{n}}\Big|\big(\varphi^{a_{n}}(\tau/\sqrt{n})\big)\Big|^{2}%
d\tau\Big)\longrightarrow0,
\end{align}
where the last step holds from Lemma $(\ref{fellerLEm2})$ and $(\ref{3the20})$. As for
the second term of $(\ref{3the21})$, by Corollary $(\ref{3cor1})$, for $n$ large enough, we have $|\mu_{3}/s^{3}|\rightarrow0$. Hence it holds
for $n$ large enough
\begin{align}
&  \sqrt{n}\int_{|\tau|>\omega\sqrt{n}}e^{-\frac{1}{2}\tau^{2}}%
\Big(1+\Big|\frac{\mu_{3}\tau^{3}}{6\sqrt{n}s^{3}}\Big|\Big)d\tau
\nonumber\label{3the23}\\
&  \leq\sqrt{n}\int_{|\tau|>\omega\sqrt{n}}e^{-\frac{1}{2}\tau^{2}}|\tau
|^{3}d\tau=\sqrt{n}\int_{|\tau|>\omega\sqrt{n}}\exp\Big\{-\frac{1}{2}\tau
^{2}+3\log|\tau|\Big\}d\tau\nonumber\\
&  =2\sqrt{n}\exp\big(-\omega^{2}n/2+o(\omega^{2}n/2)\big)\longrightarrow0,
\end{align}
where the second equality holds from, for example, Chapter $4$ of
$\cite{Bingham}$. $(\ref{3the21})$, $(\ref{3the22})$ and $(\ref{3the23})$
implicate that, for $n$ large enough
\begin{align}\label{3theo10}
\int_{|\tau|>\omega\sqrt{n}}\Big|\big(\varphi^{a_{n}}(\tau/\sqrt{n}%
)\big)^{n}-e^{-\frac{1}{2}\tau^{2}}-\frac{\mu_{3}}{6\sqrt{n}s^{3}}(i\tau
)^{3}e^{-\frac{1}{2}\tau^{2}}\Big|d\tau=o\Big(\frac{1}{\sqrt{n}}\Big).
\end{align}

If $|\tau|\leq\omega\sqrt{n}$, it holds
\begin{align}
&  \int_{|\tau|\leq\omega\sqrt{n}}\Big|\big(\varphi^{a_{n}}(\tau/\sqrt
{n})\big)^{n}-e^{-\frac{1}{2}\tau^{2}}-\frac{\mu_{3}}{6\sqrt{n}s^{3}}%
(i\tau)^{3}e^{-\frac{1}{2}\tau^{2}}\Big|d\tau\nonumber\label{3the24}\\
&  =\int_{|\tau|\leq\omega\sqrt{n}}e^{-\frac{1}{2}\tau^{2}}\Big|\exp
\Big\{n\log\varphi^{a_{n}}(\tau/\sqrt{n})+{\frac{1}{2}\tau^{2}}\Big\}-1-\frac
{\mu_{3}}{6\sqrt{n}s^{3}}(i\tau)^{3}\Big|d\tau.
\end{align}
The integrand in the last display is bounded through
\begin{align}\label{3the240}
|e^{\alpha}-1-\beta|=|(e^{\alpha}-e^{\beta})+(e^{\beta}-1-\beta)|\leq
(|\alpha-\beta|+\frac{1}{2}\beta^{2})e^{\gamma},
\end{align}
where $\gamma\geq\max(|\alpha|,|\beta|);$ this inequality follows replacing
$e^{\alpha},e^{\beta}$ by their power series, for real or complex
$\alpha,\beta$. Denote by
\[
\gamma(\tau)=\log\varphi^{a_{n}}(\tau)+{\frac{1}{2}\tau^{2}}.
\]
Since $\gamma^{\prime}(0)=\gamma^{\prime\prime}(0)=0$, the third order Taylor
expansion of $\gamma(\tau)$ at $\tau=0$ yields
\[
\gamma(\tau)=\gamma(0)+\gamma^{\prime}(0)\tau+\frac{1}{2}\gamma^{\prime\prime
}(0)\tau^{2}+\frac{1}{6}\gamma^{\prime\prime\prime}(\xi)\tau^{3}=\frac{1}%
{6}\gamma^{\prime\prime\prime}(\xi)\tau^{3},
\]
where $0<\xi<\tau$. Hence it holds
\[
\Big|\gamma(\tau)-\frac{\mu_{3}}{6s^{3}}(i\tau)^{3}\Big|=\Big|\gamma
^{\prime\prime\prime}(\xi)-\frac{\mu_{3}}{s^{3}}i^{3}\Big|\frac{|\tau
|^{3}}{6}.
\]
Here $\gamma^{\prime\prime\prime}$ is continuous; thus we can choose $\omega$
small enough such that $|\gamma^{\prime\prime\prime}(\xi)|<\rho$ for
$|\tau|<\omega$. Meanwhile, for $n$ large enough, according to Corollary
$(\ref{3cor1})$ , we have $|\mu_{3}/s^{3}|\rightarrow0$. Hence it holds for
$n$ large enough
\begin{align}\label{3the25}
\Big|\gamma(\tau)-\frac{\mu_{3}}{6s^{3}}(i\tau)^{3}\Big|\leq\Big(|\gamma
^{\prime\prime\prime}(\xi)|+\rho\Big)\frac{|\tau|^{3}}{6}<\rho|\tau|^{3}.
\end{align}
Choose $\omega$ small enough, such that for $n$ large enough it holds for
$|\tau|<\omega$
\[
\Big|\frac{\mu_{3}}{6s^{3}}(i\tau)^{3}\Big|\leq\frac{1}{4}\tau^{2}%
,\qquad|\gamma(\tau)|\leq\frac{1}{4}\tau^{2}.
\]
For this choice of $\omega$, when $|\tau|<\omega$ we have
\begin{align}\label{3the250}
\max\Big(\Big|\frac{\mu_{3}}{6s^{3}}(i\tau)^{3}\Big|,|\gamma(\tau
)|\Big)\leq\frac{1}{4}\tau^{2}.
\end{align}
Replacing $\tau$ by $\tau/\sqrt{n}$, it holds for $|\tau|<\omega\sqrt{n}$
\begin{align}\label{3the251}
&  \Big|n\log\varphi^{a_{n}}(\tau/\sqrt{n})+{\frac{1}{2}\tau^{2}}-\frac
{\mu_{3}}{6\sqrt{n}s^{3}}(i\tau)^{3}\Big| =n\Big|\gamma\Big(\frac{\tau}{\sqrt{n}}\Big)-\frac{\mu_{3}}{6s^{3}%
}\Big(\frac{i\tau}{\sqrt{n}}\Big)^{3}\Big|<\frac{\rho|\tau|^{3}}{\sqrt{n}},
\end{align}
where the last inequality holds from $(\ref{3the25})$. In a similar way, with
$(\ref{3the250})$, it also holds for $|\tau|<\omega\sqrt{n}$
\begin{align}
&  \max\Big(\Big|n\log\varphi^{a_{n}}(\tau/\sqrt{n})+{\frac{1}{2}\tau^{2}%
}\Big|,\Big|\frac{\mu_{3}}{6\sqrt{n}s^{3}}(i\tau)^{3}%
\Big|\Big)\nonumber\label{3the252}\\
&  =n\max\Big(\Big|\gamma\Big(\frac{\tau}{\sqrt{n}}\Big)\Big|,\Big|\frac
{\mu_{3}}{6s^{3}}\Big(\frac{i\tau}{\sqrt{n}}\Big)^{3}\Big|\Big)\leq\frac{1}%
{4}\tau^{2}.
\end{align}

Apply $(\ref{3the240})$ to estimate the integrand of last line of
$(\ref{3the24})$, with the choice of $\omega$ in $(\ref{3the25})$ and
$(\ref{3the250})$, using $(\ref{3the251})$ and $(\ref{3the252})$ we have for
$|\tau|<\omega\sqrt{n}$
\begin{align*}
&  \Big|\exp\Big\{n\log\varphi^{a_{n}}(\tau/\sqrt{n})+{\frac{1}{2}\tau^{2}%
}\Big\}-1-\frac{\mu_{3}}{6\sqrt{n}s^{3}}(i\tau)^{3}\Big|\\
&  \leq\Big(\Big|n\log\varphi^{a_{n}}(\tau/\sqrt{n})+{\frac{1}{2}\tau^{2}%
}-\frac{\mu_{3}}{6\sqrt{n}s^{3}}(i\tau)^{3}\Big|+\frac{1}{2}\Big|\frac{\mu
_{3}}{6\sqrt{n}s^{3}}(i\tau)^{3}\Big|^{2}\Big)\\
&  \qquad\times\exp\Big[\max\Big(\Big|n\log\varphi^{a_{n}}(\tau/\sqrt
{n})+{\frac{1}{2}\tau^{2}}\Big|,\Big|\frac{\mu_{3}}{6\sqrt{n}s^{3}}(i\tau
)^{3}\Big|\Big)\Big]\\
&  \leq\Big(\frac{\rho|\tau|^{3}}{\sqrt{n}}+\frac{1}{2}\Big|\frac{\mu_{3}%
}{6\sqrt{n}s^{3}}(i\tau)^{3}\Big|^{2}\Big)\exp\Big(\frac{\tau^{2}}{4}\Big)  =\Big(\frac{\rho|\tau|^{3}}{\sqrt{n}}+\frac{\mu_{3}^{2}\tau^{6}}{72ns^{6}%
}\Big)\exp\Big(\frac{\tau^{2}}{4}\Big).
\end{align*}
Use this upper bound to $(\ref{3the24})$, we obtain
\begin{align*}
&  \int_{|\tau|\leq\omega\sqrt{n}}\Big|\big(\varphi^{a_{n}}(\tau/\sqrt
{n})\big)^{n}-e^{-\frac{1}{2}\tau^{2}}-\frac{\mu_{3}}{6\sqrt{n}s^{3}}%
(i\tau)^{3}e^{-\frac{1}{2}\tau^{2}}\Big|d\tau\\
&  \leq\int_{|\tau|\leq\omega\sqrt{n}}\exp\Big(-\frac{\tau^{2}}{4}%
\Big)\Big(\frac{\rho|\tau|^{3}}{\sqrt{n}}+\frac{\mu_{3}^{2}\tau^{6}}{72ns^{6}%
}\Big)d\tau\\
&  =\frac{\rho}{\sqrt{n}}\int_{|\tau|\leq\omega\sqrt{n}}\exp\Big(-\frac
{\tau^{2}}{4}\Big)|\tau|^{3}d\tau+\frac{\mu_{3}^{2}}{72ns^{6}}\int_{|\tau
|\leq\omega\sqrt{n}}\exp\Big(-\frac{\tau^{2}}{4}\Big)\tau^{6}d\tau,
\end{align*}
where both the first integral and the second integral are finite, and $\rho$
is arbitrarily small; additionally, by Corollary $(\ref{3cor1})$, ${\mu
_{3}^{2}}/{s^{6}}\rightarrow0$ when $n$ large enough, hence it holds for $n$ large enough
\begin{align}\label{3theo11}
\int_{|\tau|\leq\omega\sqrt{n}}\Big|\big(\varphi^{a_{n}}(\tau/\sqrt
{n})\big)^{n}-e^{-\frac{1}{2}\tau^{2}}-\frac{\mu_{3}}{6\sqrt{n}s^{3}}%
(i\tau)^{3}e^{-\frac{1}{2}\tau^{2}}\Big|d\tau=o\Big(\frac{1}{\sqrt{n}}\Big).
\end{align}
Now $(\ref{3theo10})$ and $(\ref{3theo11})$ give $(\ref{3theo100})$. Further, using $(\ref{3theo100})$ and $(\ref{fellelem1})$, we obtain
\[
\Big|\rho_n(x)-\phi(x)-\frac{\mu_{3}}{6\sqrt{n}s^{3}}%
\big(x^{3}-3x\big)\phi(x)\Big|=o\Big(\frac{1}{\sqrt{n}}\Big),
\]
which concludes the proof.
\end{proof}

\subsection{Proof of Lemma \ref{3lemma z}} \label{3appendix03}

\begin{proof}[Proof] When $n\rightarrow\infty$, it holds
\[
z_{i}\sim{m_{i}}/{s_{i}\sqrt{n-i-1}}\sim m_{i}/(s_{i}\sqrt{n}).
\]
From Theorem $\ref{order of s}$, it holds $m(t)\sim{\psi(t)}$ and
$s(t)\sim\sqrt{\psi^{^{\prime}}(t)}$. Hence we have
\begin{align}\label{3felff}
z_{i}\sim\frac{\psi(t_{i})}{\sqrt{n\psi^{^{\prime}}(t_{i})}}.
\end{align}
By $(\ref{3lfd01})$, $m_i\sim m(t)$ as $n\rightarrow\infty$. Then
\begin{align*}
m_i \sim \psi(t)=a_n.
\end{align*}
In addition, $m_i \sim \psi(t_i)$ by Theorem $\ref{order of s}$, this implies
\begin{align}\label{3fel}
\psi(t_i) \sim \psi(t).
\end{align}

\textbf{Case 1:} if $h(x)\in R_\beta$.
We have $h(x)=x^\beta l_0(x) , l_0(x)\in R_0,\beta>0$. Hence
\begin{align*}
h^{'}(x)=x^{\beta-1}l_0(x)\big(\beta+\epsilon(x)\big),
\end{align*}
set $x=\psi(u)$, we get
\begin{align}\label{3fel0}
h^{'}\big(\psi(u)\big)=\big(\psi(u)\big)^{\beta-1}l_0\big(\psi(u)\big)\big(\beta+\epsilon\big(\psi(u)\big)\big).
\end{align}
Notice $\psi^{'}(u)=1/h^{'}\big(\psi(u)\big)$, combine $(\ref{3fel})$ with $(\ref{3fel0})$, we obtain
\begin{align}\label{3fge01}
\frac{\psi^\prime(t_i)}{\psi^\prime(t)}=\frac{h^{'}\big(\psi(t)\big)}{h^{'}\big(\psi(t_i)\big)}
=\frac{\big(\psi(t)\big)^{\beta-1}l_0\big(\psi(t)\big)\big(\beta+\epsilon\big(\psi(t)\big)\big)}
{\big(\psi(t_i)\big)^{\beta-1}l_0\big(\psi(t_i)\big)\big(\beta+\epsilon\big(\psi(t_i)\big)\big)}\longrightarrow 1,
\end{align}
where we use the slowly varying propriety of $l_0$. Thus it holds
\begin{align*}
\psi^\prime(t_i) \sim \psi^\prime(t),
\end{align*}
which, together with $(\ref{3fel})$, is put into $(\ref{3felff})$ to yield
\begin{align}\label{3f41ge011}
z_i  \sim \frac{\psi(t)}{\sqrt{n\psi^{'}(t)}}.
\end{align}
Hence we have under condition $(\ref{croissance de a})$
\begin{align}\label{3f41ge0111}
z_i^2  \sim \frac{\psi(t)^2}{{n\psi^{'}(t)}}=\frac{\psi(t)^2}{{\sqrt{n}\psi^{'}(t)}}\frac{1}{\sqrt{n}}=o\Big(\frac{1}{\sqrt{n}}\Big),
\end{align}
which implies further $z_{i}\rightarrow0$. Note that the final step is used in
order to relax the strength of the growth condition on $a_{n}.$

\textbf{Case 2:} if $h(x)\in R_{\infty}$. By $(\ref{3lfd01})$, it holds
$m(t_{i})\ge m(t)$ as $n\rightarrow\infty$. Since the function
$t\rightarrow m(t)$ is increasing, we have
\[
t\leq t_{i}.
\]
Notice the function $x\rightarrow \psi(x)$ is also increasing, we get
\[
\psi(t_i)\ge \psi(t).
\]
The function $x\rightarrow\psi^{^{\prime}}(x)$ is decreasing, since
\begin{align}\label{3fqdqg}
\psi^{^{\prime\prime}}(x)=-\frac{\psi(x)}{x^{2}}\epsilon
(x)\big(1+o(1)\big)<0\qquad as\quad x\rightarrow\infty.
\end{align}
Therefore as $n\rightarrow\infty$
\[
\psi^{\prime}(t)\geq\psi^{\prime}(t_{i})>0.
\]
Perform one Taylor expansion of $\psi(t_i)$ for some $\theta_1\in (0,1)$
\begin{align}\label{3qdm}
\psi(t_i)-\psi(t)&=\psi^\prime(t)(t_i-t)+\frac{1}{2}\psi^{''}(t+\theta_1(t_i-t))(t_i-t)^2\nonumber\\
&=\frac{\psi(t)\epsilon(t)}{t}(t_i-t)+\frac{1}{2}\psi^{''}(t+\theta_1(t_i-t))(t_i-t)^2.
\end{align}
By $(\ref{3fel})$
\begin{align*}
\frac{\psi(t_i)-\psi(t)}{\psi(t)}\longrightarrow 0,
\end{align*}
which together with $(\ref{3fqdqg})$ and $(\ref{3qdm})$ yields
\begin{align}\label{3qdm65}
\frac{\epsilon(t)}{t}(t_i-t)\longrightarrow 0.
\end{align}
Perform one Taylor expansion of $\psi^{\prime}(t_i)$ for some $\theta_2\in (0,1)$
\begin{align*}
\psi^\prime(t_i)-\psi^\prime(t)&=\psi^{''}(t)(t_i-t)+\frac{1}{2}\psi^{'''}(t+\theta_2(t_i-t))(t_i-t)^2\nonumber\\
&=-\frac{\psi(t)\epsilon(t)}{t^2}(t_i-t)\big(1+o(1)\big)+\frac{1}{2}\psi^{'''}(t+\theta_2(t_i-t))(t_i-t)^2,
\end{align*}
where the first term goes to $0$ as $n\rightarrow\infty$ by $(\ref{3qdm65})$, and the second term is infinitely small with respect to the first term (see Section \ref{Chp2proof}, e.g. $(\ref{mu3 004})$). Hence
\begin{align*}
\psi^\prime(t_i)\sim\psi^\prime(t).
\end{align*}
The proof is completed by repeating steps $(\ref{3f41ge011})$ and $(\ref{3f41ge0111})$.

\end{proof}

\end{document}